\newcommand{\Sym}{{\rm Sym}} 
\newcommand{\Jac}{{\rm Jac}}
\newcommand{\R}{{\mathbb{R}}}
\renewcommand{\P}{{\mathbb{P}}}
\newcommand{\I}{{\mathbb{I}}}
\newcommand{\CP}{{\mathbb{C}}{{P}}}
\newcommand{\beq}{\begin{equation}}
\newcommand{\eeq}{\end{equation}}
\newcommand{\bea}{\begin{eqnarray}}
\newcommand{\eea}{\end{eqnarray}}
\newcommand{\ben}{\begin{eqnarray*}}
\newcommand{\een}{\end{eqnarray*}}
\newcommand{\bem}{\begin{enumerate}}
\newcommand{\eem}{\end{enumerate}}
\newcommand{\ii}{\item}
\newcommand{\ra}{\rightarrow}
\newcommand{\cd}{\partial}
\newcommand{\wh}{\widehat}
\newcommand{\less}{\backslash}
\newcommand{\X}{{\mathscr{X}}}
\def \d{\mathrm{d}}
\newcommand{\dstar}{\delta}
\newcommand{\ip}[1]{\langle #1 \rangle}
\newcommand{\ignore}[1]{}
\newcommand{\vol}{{\rm vol}}
\newcommand{\ol}{\overline}
\newcommand{\eps}{\varepsilon}
\renewcommand{\phi}{\varphi}
\theoremstyle{plain}
\newtheorem{thm}{Theorem}
\newtheorem{lemma}[thm]{Lemma}
\newtheorem{prop}[thm]{Proposition}
\newtheorem{cor}[thm]{Corollary}
\newcommand{\news}{\setcounter{equation}{0}}
\newenvironment{proof}{\noindent{\it Proof:\, }}{\hfill$\Box$\vspace*{0.5cm}
}
\newenvironment{lproof}[1]{\noindent{\it Proof #1:\, }}{\hfill$\Box$\vspace*{0.5cm}

}
\begin{document}

\title{The geometry of the space of vortices on a two-sphere in the Bradlow limit}
\author{R.I. Garc\'ia Lara\thanks{E-mail: {\tt rene.garcia@correo.uady.mx}}\\ 
Facultad de Matematicas, Universidad Autonoma de Yucatan\\
Merida, Mexico \\ \\
J.M. Speight\thanks{E-mail: {\tt j.m.speight@leeds.ac.uk} (corresponding author)}\\
School of Mathematics, University of Leeds\\
Leeds LS2 9JT, England}

%\date{}
\maketitle

\begin{abstract}
It is proved that the normalized $L^2$ metric on the moduli space
of $n$-vortices on a two-sphere, endowed with any Riemannian metric, converges uniformly in the Bradlow limit to
the Fubini-Study metric. This establishes, in a rigorous setting, a longstanding informal conjecture of Baptista and Manton. 
\end{abstract}

\maketitle

\section{Introduction}
\news

Vortices are the simplest class of topological solitons arising in gauge theory, making them interesting objects of mathematical study, independent of their phenomenological applications in condensed matter physics and cosmology. At critical coupling, they satisfy a self-duality (or Bogomol'nyi) type condition which implies that static vortices exert no net force on one another. The moduli space $M_n$ of static $n$-vortex solutions is thus exceptionally large, forming a complex $n$-manifold. There is a well-developed programme for studying their low energy dynamics in this regime, comprehending classical, quantum and
statistical mechanics, originally proposed by Manton. The key object underpinning this programme is a natural Riemannian metric $g$ on $M_n$, called the $L^2$ metric, obtained by restricting the model's kinetic energy functional to
$TM_n$. Vortex dynamics is modelled by geodesic motion in $(M_n,g)$.  For
a comprehensive review of the geodesic approximation to vortex dynamics, see
\cite[ch.\ 3]{mansut}.

The $L^2$ metric on $M_n$ is, then, the object of strong and sustained mathematical interest. There are very few situations in which $g$ can be computed
exactly, however. Other than the rather trivial case of $M_1$ when physical space  is homogeneous, Strachan has exactly computed the metric on $M_2$ on the hyperbolic plane \cite{str}. Samols \cite{sam} adapted this calculation to obtain a semi-explicit localization formula for the metric on an arbitrary surface (for example  the Euclidean plane) which is useful for numerics, and can be used to prove many interesting global and qualitative properties of $g$, but it does not yield explicit formulae. Even the metric on $M_1$ is nontrivial if physical space is not homogeneous \cite{man-ricciflow}.

This paper examines one case where explicit progress is possible, at least in a certain parametric limit: vortices on a two-sphere. Bradlow \cite{bra}
observed that, on a compact Riemann surface $\Sigma$, there is a lower bound, proportional to $n$, on the area $|\Sigma|$ required for the surface to accommodate $n$-vortices. Above this bound, $M_n$ is biholomorphic to $\Sym_n\Sigma$, the $n$-fold symmetric product of $\Sigma$, since vortices are uniquely determined by the collection of $n$ points on $\Sigma$ at which their Higgs field vanishes. As $|\Sigma|$ approaches the bound from above, the vortices spread out and lose their spatial localization. Precisely at the bound, they delocalize entirely: the Higgs field vanishes identically and the magnetic field is uniform. Such vortex solutions are sometimes called ``dissolved" vortices, and the limit in which they appear called the ``dissolving" limit. The moduli space of dissolved vortices is precisely the moduli space of constant curvature connexions on the line bundle of which the Higgs field is a section. If the genus $r$ of $\Sigma$ is positive this is a torus of dimension $2r$ which may naturally be identified with the Jacobian variety of $\Sigma$. It also has an $L^2$ metric, to which the $L^2$ metric $g$ on $M_n$ conjecturally degenerates (more precisely, $g$ conjecturally degenerates to the pullback of the
 metric on the torus by the Abel-Jacobi map $\Sym_n\Sigma\ra\Jac(\Sigma)$), a scenario studied in detail in \cite{manrom}. If $r=0$, that is, $\Sigma=S^2$, the moduli space of dissolved vortices is a point, while $M_n\equiv \CP^n$. To understand the metric $g$ in the dissolving limit we must rescale it by (for example) demanding that $(M_n,g)$ has some normalized volume. In this context, Baptista and Manton \cite{bapman} made the remarkable conjecture (on the {\em round} two-sphere), that $g$ converges in the dissolving limit to the Fubini-Study metric on $\CP^n$, implying an enormous gain in symmetry in this limit. 

The purpose of this paper is to give a precise formulation and rigorous proof of a slight generalization of Baptista and Manton's conjecture: we will show that, in the dissolving limit, the normalized $L^2$ metric on $M_n(S^2)$ converges uniformly to the Fubini-Study metric, regardless of the metric on $S^2$ (that is, we remove the assumption that the domain sphere is round). The $L^2$ geometry in the dissolving limit is thus unreasonably simple: a family of metrics with  (generically) no nontrivial isometries at all converges uniformly to a homogeneous metric of constant holomorphic sectional curvature. The convergence established implies \cite{banura} that the spectrum of the Laplacian on $M_n$ converges uniformly to the spectrum of the Fubini-Study Laplacian, which is easily computed \cite{bergaumaz}. The theorem thus has immediate consequences for the
{\em quantum} dynamics of vortices on $S^2$ in the dissolving limit.

 The paper is structured as follows. In section \ref{sec2} we formulate the model and give a precise statement of the theorem. In section \ref{sec3} we review the notion of {\em pseudo-vortices}, introduced by Baptista and Manton, and prove that vortices converge uniformly to pseudo-vortices in the dissolving limit. In section \ref{sec4}, we establish convergence of the metric, finishing the proof of the main theorem, while in section \ref{sec5} we study the convergence of the spectrum of the Laplacian.

\section{Statement of the theorem}\label{sec2}\news

Let $(\Sigma,g_\Sigma)$ be an oriented Riemannian manifold diffeomorphic to $S^2$ and $(L,h)$ be a hermitian line bundle over $\Sigma$ of degree $n\geq 0$. To any section $\phi$ of $L$ and
unitary connexion $A$ on $L$ we associate the energy
\beq
E(\phi,A)=\int_\Sigma\left(\frac12|\d_A\phi|^2+\frac12|F_A|^2
+\frac18(\tau-|\phi|^2)^2\right)
\eeq
where $\tau>0$ is a constant parameter and $F_A$ is the curvature of $A$. If we choose a local section $\eta$ of $L$ with $|\eta|^2=h(\eta,\eta)=1$, we may identify the connexion $A$ locally with the real one-form $h(i\d_A\eta,\eta)$ which, in a slight abuse of notation, we also denote $A$. Then for a general local section $f\eta$, where $f$ is smooth and complex valued, 
\beq
\d_A(f\eta)=(\d f -iAf)\eta.
\eeq
We adopt the convention that the curvature $F_A$ is real, coinciding (globally) with $\d A$. The energy functional is invariant under gauge transformations,
\beq
(\phi,A) \mapsto (e^{i\chi}\phi,A+\d\chi),
\eeq
where $\chi:\Sigma\ra\R$ is smooth.

It is well known \cite{bog,bra} that $E(\phi,A)\geq\tau\pi n$ with equality if and only if
\bea
\label{v1}
\ol\cd_A\phi&=&0\\
\label{v2}
*F_A&=&\frac12(\tau-|\phi|^2),
\eea
where $\ol\cd_A$ denotes the $0,1$ part of $\d_A$ (with respect to the complex structure on $\Sigma$ defined by its orientation and metric) and $*$ is the Hodge isomorphism defined by $g_\Sigma$.
Solutions of this system of equations are called vortices. Since they attain a topological lower bound on $E$, they are automatically
global minima of $E$. 

A necessary condition for existence of vortices was obtained by Bradlow \cite{bra} by integrating \eqref{v2} over $\Sigma$:
\beq\label{bradconstraint}
2\pi n = \frac12\tau|\Sigma|-\frac12\int_\Sigma |\phi|^2\leq
\frac12\tau|\Sigma|
\eeq
where $|\Sigma|$ denotes the area of $\Sigma$. Hence, if vortices exist,
\beq
\eps:=\tau|\Sigma|-4\pi n\geq 0.
\eeq
We refer to $\eps\searrow 0$ as the Bradlow limit. More precisely, we shall consider the limit where the metric $g_\Sigma$ (and hence $|\Sigma|$) is fixed, and
$\tau\searrow 4\pi n/|\Sigma|$. This is more 
convenient for our purposes than the limit considered by
Baptista and Manton ($\tau\equiv 1$ and $g_\Sigma$ varying through round metrics, with $|\Sigma|$
approaching $4\pi n$ from above) because it allows us to define Sobolev spaces with fixed norms determined by $g_\Sigma$. That our limit includes theirs follows from the observation that, if $(\phi,A)$ solves the vortex equations on $(\Sigma, g_\Sigma)$ at coupling $\tau=\tau_0$, then $(\tau_0^{-1/2}\phi,A)$ satisfies the vortex equations on $(\Sigma, \tau_0g_\Sigma)$ at coupling $\tau=1$. Note that
at $\eps=0$ any vortex must have $\phi=0$ and $*F_A$ constant, so that vortices completely lose their spatial localization, which is why this is often called the {\em dissolving} limit. For $\eps>0$, every vortex has $\|\phi\|_{L^2}^2=\eps$. 

The moduli space of vortices $M_n$ is the space of gauge equivalence classes of solutions of \eqref{v1}, \eqref{v2}. This is a complex $n$-manifold canonically diffeomorphic to $\Sym_n\Sigma$, the $n$-fold symmetric product of $\Sigma$, the canonical diffeomorphism $F:M_n\ra\Sym_n\Sigma$ being the map which sends the (gauge equivalence class of a) vortex solution $(\phi,A)$ to the set of points at which $\phi$ vanishes, counted with multiplicity. That is, we map the gauge equivalence class  of solutions $[(\phi,A)]$ to the degree $n$ effective divisor
$F([(\phi,A)])=(\phi)$ in $\Sigma$. In this way we identify the $\eps$-dependent family of manifolds $M_n$ with the single fixed manifold
$\Sym_n\Sigma$ which, in a slight abuse of notation, we will also denote $M_n$.

Since $\Sigma=S^2$, we may 
specify a degree $n$ effective divisor $D$ by giving a polynomial
\beq
a_0+a_1z+\cdots+a_nz^n
\eeq
vanishing on $D$, where $z$ is a stereographic coordinate. Two such polynomials define the same divisor if and only if one is a constant multiple of the other, so we identify
$(\phi,A)$, up to gauge, with $[a_0,a_1,\ldots,a_n]\in\CP^n$. Hence
$M_n\equiv \CP^n$. 

For our purposes, an alternative identification $M_n\equiv \CP^n$ is more convenient, however. Let us choose and fix, once and for all, a unitary connexion $\wh{A}$ on $L$ of constant curvature. This is unique up to gauge.  We equip $L$ with the holomorphic structure whose Dolbeault operator $\ol\cd_L=\ol\cd_{\wh{A}}$. Then 
$H^0(L)$, the space of holomorphic sections of $L$, is a complex vector space of dimension $n+1$. Each nonzero section in $H^0(L)$ defines an effective divisor of degree $n$ (on which it vanishes), and two sections define the same divisor if and only if one is a constant nonzero multiple of the other. Hence we have an identification of $M_n$ with the complex projective space of lines through $0$ in
$H^0(L)$. In other words, we identify $M_n$ with $\CP^n$ via the bijective map
\beq
f:M_n\ra\P(H^0(L)),\qquad [(\phi,A)]\mapsto\{\psi\in H^0(L): (\psi)=(\phi)\}
\eeq
which identifies $[(\phi,A)]$ with the line in $H^0(L)$ consisting of holomorphic sections vanishing on the same divisor as $\phi$. This allows us to equip $M_n$ with a Fubini-Study metric as follows. The $L^2$ inner product on $\Gamma(L)$ defines  a hermitian inner product on $H^0(L)$. Denote by $S$ the unit sphere in $(H^0(L),\|\cdot\|_{L^2})$, and by
$\pi:S\ra\P(H^0(L))$, $\pi(\psi)=[\psi]$, the Hopf fibration. Then the Fubini-Study metric of constant holomorphic sectional curvature $2$ is precisely the metric $g_{FS}$ on $\P(H^0(L))$ with respect to which $\pi$ is a Riemannian submersion. By the {\em Fubini-Study metric} on $M_n$ we will always mean
\beq
g_0:=f^*g_{FS}.
\eeq

A priori, there is no reason to expect $g_0$ to have any relevance to vortex dynamics, which is controlled \cite[ch.\ 3]{mansut} by a natural Riemannian metric $g$ on $M_n$ called the $L^2$ metric. This is
defined as follows. Any smooth curve $(\phi(t),A(t))$ of vortex solutions through $(\phi(0),A(0))$ tautologically defines a tangent vector to $M_n$ at $[(\phi(0),A(0))]$, to which we associate the
squared length
\beq
\int_\Sigma\left(|P(\dot\phi(0),\dot{A}(0))|^2\right)
\eeq
where $P$ denotes projection $L^2$ orthogonal to the gauge orbit through
$(\phi(0),A(0))$. No explicit formula for the $L^2$ metric is known (except in the trivial case that $n=1$ and $\Sigma$ is round), but it is known to be 
k\"ahler, and a formula for its k\"ahler class was obtained by Baptista \cite{bap-l2}. From this, it follows that the volume of $M_n$ is
\beq
|M_n|=\frac{\pi^n}{n!}(\tau|\Sigma|-4\pi n)^n=\frac{\pi^n\eps^n}{n!},
\eeq
which vanishes in the Bradlow limit. Hence $g$ degenerates in this limit so, to discuss its convergence, we must rescale. We define the
normalized $L^2$ metric on $M_n$ to be
\beq
g_\eps:=\frac{1}{\eps} g.
\eeq
This is a one-parameter family of metrics on $M_n$ of fixed volume $\pi^n/n!$. 
The purpose of this paper is to establish

\begin{thm}\label{main} In the limit $\eps\searrow 0$, the normalized $L^2$ metric ${g}_\eps$ converges in $C^0$ to ${g}_0$, the Fubini-Study metric on
$M_n$.
\end{thm}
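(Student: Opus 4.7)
The strategy is to exploit the convergence of vortices to pseudo-vortices established in Section 3, and use it to compare, to leading order in $\eps$, the $L^2$ inner product on tangent vectors to $M_n$ with $\eps$ times the Fubini--Study inner product that $g_0$ assigns via the identification $f:M_n\to\P(H^0(L))$.

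First I would parametrize $M_n$ by lifting through the Hopf fibration to the $L^2$ unit sphere $S\subset H^0(L)$. For each $\psi\in S$ and each small $\eps>0$, Section 3 should single out a representative vortex $(\phi^\psi_\eps,A^\psi_\eps)$ with divisor $(\phi^\psi_\eps)=(\psi)$, together with uniform-in-$\psi$ expansions of the form $\phi^\psi_\eps=\sqrt\eps\,\psi+o(\sqrt\eps)$ and $A^\psi_\eps=\wh A+o(1)$ in norms strong enough to control the $L^2$ metric. Given a horizontal vector $\chi\in T_\psi S$, i.e.\ with $\chi\perp_{L^2}\psi$, take any smooth path $t\mapsto\psi(t)\in S$ with $\psi(0)=\psi$, $\dot\psi(0)=\chi$; lifting it to a smooth path $(\phi(t),A(t))$ of vortices and differentiating at $t=0$ yields an infinitesimal deformation $(\dot\phi,\dot A)$ satisfying the linearized vortex equations. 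I expect $\dot\phi=\sqrt\eps\,\chi+o(\sqrt\eps)$ while $\dot A=o(1)$, both in $L^2$.

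The $L^2$ squared length of the corresponding tangent vector $V\in T_{[\psi]}M_n$ is $g(V,V)=\|P(\dot\phi,\dot A)\|_{L^2}^2$, where $P$ is $L^2$-orthogonal projection off the gauge orbit. The gauge variations $(i\xi\phi,\d\xi)$ have their $\phi$-component of size $O(\sqrt\eps)$, so the projection correction contributes at worst at order $\eps$; combined with the leading-order terms above this gives
\beq
g(V,V)=\eps\,\|\chi\|_{L^2}^2+o(\eps)
\eeq
uniformly in $([\psi],V)$. Since $g_0(V,V)=\|\chi\|_{L^2}^2$ by construction of the Fubini--Study metric, dividing by $\eps$ produces the required uniform $C^0$ convergence $g_\eps\to g_0$ on the compact manifold $M_n$.

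The main obstacle is promoting Section 3's pointwise-in-moduli convergence to a differentiable-in-moduli statement, with remainder estimates uniform in $[\psi]$, and controlling the gauge projection. The linearized system couples $\dot\phi$ and $\dot A$ through coefficients involving $|\phi|^2$, which itself has size $\eps$, and the gauge-fixing operator $\d^\star\d+|\phi|^2$ accordingly becomes near-degenerate as $\eps\searrow 0$; I would handle this by working in the horizontal-lift gauge in Sobolev spaces defined by the \emph{fixed} background metric $g_\Sigma$ (one of the reasons for considering the limit $\tau\searrow 4\pi n/|\Sigma|$ rather than the rescaled limit), and by exploiting the fact that at $\eps=0$ the limiting ``vortex'' is $(\o,\wh A)$, whose linearized operator has kernel precisely $H^0(L)$, yielding a clean implicit-function normal form. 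Once $\dot\phi\approx\sqrt\eps\,\chi$ in $L^2$ and $\dot A$ is shown to be $L^2$-small, the computation of $g(V,V)$ collapses to $\eps\|\chi\|_{L^2}^2$ with a controlled $o(\eps)$ error, and the theorem follows.
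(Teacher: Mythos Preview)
Your high-level strategy coincides with the paper's: lift tangent vectors to horizontal $\dot{\wh\phi}\in T_\psi S$ (orthogonal to both $\psi$ and $i\psi$), show the true linearized vortex $(\dot\phi,\dot A)$ agrees with $(\sqrt\eps\,\dot{\wh\phi},0)$ up to errors of order $\eps^{3/2}$ and $\eps$ in $L^2$, uniformly over $M_n$, and conclude by polarization. The paper carries this out by writing $\phi=\sqrt\eps\,e^{u/2+i\chi}\wh\phi$, $A=\wh A-\tfrac12\!*\!\d u+\d\chi$, so that linearizing the second vortex equation and imposing gauge orthogonality yield scalar PDEs for $\dot u$ and $\dot\chi$ of the common shape $\Delta v+av=b$ with $a=\eps e^u|\wh\phi|^2$ and $b=-\eps e^u h(\wh\phi,\dot{\wh\phi})$ or $-\eps e^u h(i\wh\phi,\dot{\wh\phi})$.

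Where your sketch has a genuine gap is the step you yourself flag: the operator $\Delta+a$ degenerates to $\Delta$ as $\eps\to0$, and an ``implicit-function normal form'' does not by itself produce bounds uniform in $\eps$ and in $[\psi]$. A standard elliptic estimate for $\Delta+a$ gives $\|v\|_{H^2}\leq C(a)\|b\|_{L^2}$ with no control on $C(a)$ as $a\to0$; an IFT about $\eps=0$ faces the same issue because the limiting linearization has kernel (the constants). The paper's mechanism is different and quite specific: it proves a Lax--Milgram estimate (Lemma~\ref{LMlem}) bounding $\|v\|_{H^1}$ by an explicit expression in $\|a\|_{L^2}$, $\int a$, $\|b\|_{L^2}$ and, crucially, $|\int b|$ kept \emph{separate}. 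The dangerous term is $|\int b|/\int a$, naively $O(1)$ since both integrals are $O(\eps)$. What saves the day is exactly the horizontality you impose: $\langle\wh\phi,\dot{\wh\phi}\rangle_{L^2}=\langle i\wh\phi,\dot{\wh\phi}\rangle_{L^2}=0$ forces $\int_\Sigma h(\wh\phi,\dot{\wh\phi})=\int_\Sigma h(i\wh\phi,\dot{\wh\phi})=0$, so $\int b=\eps\int(e^u-1)k=O(\eps^2)$ by the bound $\|u\|_{C^0}\leq C\eps$ from Section~3. This cancellation, fed into the Lax--Milgram estimate, yields $\|\dot u\|_{H^1},\|\dot\chi\|_{H^1}\leq C\eps\|\dot{\wh\phi}\|_{L^2}$ (Proposition~\ref{jodgas}), from which $\|\dot A\|_{L^2}^2=O(\eps^2)$ and $\|\dot\phi\|_{L^2}^2=\eps\|\dot{\wh\phi}\|_{L^2}^2+O(\eps^2)$ follow directly. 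Your remark that ``the projection correction contributes at worst at order $\eps$'' is therefore correct as a conclusion but hides precisely this nontrivial step: controlling the gauge projection \emph{is} solving $\Delta\dot\chi+a\dot\chi=b$, and getting $\dot\chi=O(\eps)$ requires the cancellation above, not just the size of $|\phi|^2$.
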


A result of this kind was conjectured by Baptista and
Manton in the case that $\Sigma$ is round, motivated by the approximation
of vortices by what they called {\em pseudo-vortices} \cite{bapman}. These are pairs $(\phi,\wh{A})$ consisting of a connexion of constant curvature, and a section holomorphic with respect to $\ol\cd_{\wh{A}}$ satisfying a normalization condition. Our proof proceeds by generalizing this approximation to arbitrary spheres, and rigorously controlling its errors.

\section{Convergence of vortices to pseudo-vortices}\label{sec3}
\news

Since we are concerned with the limit $\eps\searrow 0$, we assume henceforth that 
\beq
\frac{4\pi n}{|\Sigma|}<\tau<\frac{1+4\pi n}{|\Sigma|},
\eeq
so $\eps\in(0,1)$. Recall we have fixed a connexion $\wh{A}$ on $L$ of constant curvature $*F_{\wh{A}}=2\pi n/|\Sigma|$, and used this to equip $L$ with a holomorphic structure.  Given a divisor $D\in \Sym_n\Sigma$, let us denote by $\wh{\phi}_D\in H^0(L)$ a holomorphic section of $L$ with
$\|\wh\phi_D\|_{L^2}=1$ and $\wh{\phi}^{-1}(0)=D$. This is unique up to multiplication by a
constant in $U(1)$. Let us define the {\em pseudo-vortex} corresponding to the divisor $D$ to be the  pair
$(\sqrt{\eps}\wh\phi_D,\wh{A})$, unique up to $\wh\phi_D\mapsto
e^{ic}\wh\phi_D$ where $c\in\R$ is constant. This pair satisfies the first vortex equation
\eqref{v1}, but not the second \eqref{v2}. It does satisfy \eqref{v2}
``on average," however, that is, it satisfies the constraint
 \eqref{bradconstraint} obtained by
integrating \eqref{v2} over $\Sigma$. This observation led Baptista and Manton to conjecture that, for small $\eps$, the actual vortex
with divisor $D$ should be well approximated by its corresponding
pseudo-vortex \cite{bapman}. In this section we will prove that this
intuition is correct in the following precise sense:

\begin{prop}\label{vortexprop} There exists a constant $C>0$, depending only on $(L,h)$ and
$g_\Sigma$, such that, for all vortex solutions $[(\phi,A)]\in M_n$,
$$
\|\eps^{-1/2}|\phi|-|\wh\phi_D|\|_{C^0}\leq C\eps,\qquad
\|F_{A}-F_{\wh{A}}\|_{C^0}\leq C\eps,
$$
where $D=(\phi)$ and $\|\cdot\|_{C^0}$ denotes the $\sup$ norm. 
\end{prop}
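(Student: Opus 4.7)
Because $\phi$ and $\sqrt{\eps}\,\wh\phi_D$ both vanish on the same divisor $D$ with the same multiplicities, the ratio $|\phi|^2/(\eps|\wh\phi_D|^2)$ extends to a smooth positive function on $\Sigma$; write it as $e^u$ for a smooth real function $u$. Both inequalities claimed will follow immediately from a single uniform bound
\[
\|u\|_{C^0}\le C\eps,\qquad C\ \text{independent of}\ D\in\Sym_n\Sigma.
\]
To extract the governing PDE, apply the Poincar\'e--Lelong-type identity $F_B=-i\partial\bar\partial\log|\psi|^2$ (valid away from the zero set of any $\bar\partial_B$-holomorphic section $\psi$) to both $(\phi,A)$ and $(\wh\phi_D,\wh A)$. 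Subtracting cancels the logarithmic singularities, giving $F_A-F_{\wh A}=-i\partial\bar\partial u$ globally on $\Sigma$. Taking $\star$ and substituting $\star F_A=(\tau-|\phi|^2)/2$, $\star F_{\wh A}=2\pi n/|\Sigma|$, and $\tau|\Sigma|-4\pi n=\eps$ yields the semilinear equation
\[
\Delta u \;=\; \frac{\eps}{|\Sigma|}\;-\;\eps\,|\wh\phi_D|^2\,e^u,
\]
where $\Delta$ is the positive Laplacian of $g_\Sigma$. Integrating and using $\int_\Sigma\Delta u=0$ recovers the Bradlow normalization $\int_\Sigma|\wh\phi_D|^2 e^u\,dA_\Sigma=1$, equivalent to $\|\phi\|_{L^2}^2=\eps$.

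\textbf{Implicit function theorem around the trivial solution.} At $\eps=0$ the PDE forces $u$ constant, and the integral normalization pins it to $u\equiv0$. To perturb, consider
\[
\wt\Phi(u,\eps)\;=\;\Bigl(\Delta u+\eps|\wh\phi_D|^2 e^u-\eps/|\Sigma|,\ \textstyle\int_\Sigma|\wh\phi_D|^2 e^u\,dA_\Sigma-1\Bigr)
\]
as a map $C^{2,\alpha}(\Sigma)\times[0,1)\to C^{0,\alpha}(\Sigma)\times\R$. A direct calculation shows that $\int_\Sigma$ of the first component equals $\eps$ times the second, so simultaneous zeros of $\wt\Phi$ lie in the zero-mean subspace $C^{0,\alpha}_0(\Sigma)\times\R$, on which one can reframe $\wt\Phi$ cleanly (for instance by subtracting $\eps/|\Sigma|$ times the second component from the first). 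The $u$-linearization at $(0,0)$, namely $v\mapsto(\Delta v,\int|\wh\phi_D|^2 v)$, is an isomorphism onto $C^{0,\alpha}_0(\Sigma)\times\R$: $\Delta$ surjects onto the zero-mean functions with kernel the constants, and the positive functional $\int|\wh\phi_D|^2\cdot$ detects that constant since $\int_\Sigma|\wh\phi_D|^2=1$. The implicit function theorem then produces a unique smooth branch $u=u(\eps)$ of solutions in a $C^{2,\alpha}$ neighbourhood of $0$, with $u(0)=0$, and Taylor expansion gives $\|u(\eps)\|_{C^{2,\alpha}}=O(\eps)$.

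\textbf{Uniformity over $D$ and extraction of the pointwise bounds.} The PDE and the normalization depend on the divisor only through the map $D\mapsto|\wh\phi_D|^2$, which is continuous from the compact space $\Sym_n\Sigma$ into $C^{k,\alpha}(\Sigma)$ (for every $k$), so the IFT data, the size of the neighbourhood, and the constant in the bound $\|u(\eps)\|_{C^{2,\alpha}}\le C\eps$ can all be chosen uniformly in $D$. The two inequalities of the proposition then drop out from the pointwise identities
\[
\eps^{-1/2}|\phi|-|\wh\phi_D|=|\wh\phi_D|\bigl(e^{u/2}-1\bigr),\qquad \star(F_A-F_{\wh A})=\tfrac{\eps}{2}\bigl(|\Sigma|^{-1}-|\wh\phi_D|^2 e^u\bigr),
\]
each bounded in sup norm by $C'\eps$ using $\sup_D\|\wh\phi_D\|_{C^0}<\infty$ together with $|e^{u/2}-1|\le c|u|$ on the IFT neighbourhood.

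\textbf{Main obstacle.} The serious point is to identify the IFT branch with the one traced out by genuine vortex solutions: the implicit function theorem produces a unique solution only in some $C^{2,\alpha}$ neighbourhood of $0$, whereas the proposition is formulated for all $\eps\in(0,1)$. I expect to address this by a continuity argument in $\eps$: the family of vortex moduli is smooth in $\eps$ (standard elliptic regularity) and vortices are unique modulo gauge for given $D$ and $\tau$, so the smooth one-parameter family of functions $u$ arising from actual vortices cannot leave the IFT neighbourhood as $\eps$ varies without producing a bifurcation, which uniqueness forbids. Verifying that this continuity argument runs uniformly in $D$, and in particular that $u\to 0$ as $\eps\to 0$ strongly enough to enter the IFT neighbourhood for all $D$ simultaneously, is where I anticipate the bulk of the technical work.
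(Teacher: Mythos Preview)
Your reduction to the scalar PDE for $u$ matches the paper exactly, and the final extraction of the two $C^0$ bounds from $\|u\|_{C^0}\le C\eps$ is also the same. Where you diverge is in how you obtain that bound on $u$.

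The paper avoids the implicit function theorem altogether. It first proves a crude uniform bound $\|u\|_{C^0}\le C$ by a direct energy argument: since $*F_A-2\pi n/|\Sigma|=\tfrac12\Delta u_0$ and $\|F_A\|_{L^2}^2\le 2E(\phi,A)=2\pi\tau n$, one gets $\|\Delta u_0\|_{L^2}$ bounded, whence $\|u_0\|_{C^0}$ is bounded by Sobolev and the elliptic estimate for $\Delta$; a short calculation then shows $|\bar u|\le\|u_0\|_{C^0}$. Feeding this crude bound back into the PDE gives $\|\Delta u_0\|_{L^2}\le C\eps$, and the same chain yields $\|u\|_{C^0}\le C\eps$ for every $\eps\in(0,1)$ and every $D$. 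This is completely elementary and uniform from the outset.

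Your IFT route is a genuine alternative and does work for small $\eps$, but your ``main obstacle'' is both easier and harder than you suggest. Easier, because you already have the key: global uniqueness of solutions to the PDE (Kazdan--Warner, which the paper cites). Once the IFT hands you a solution for $\eps<\eps_0$, it \emph{is} the vortex $u$; no continuity or bifurcation argument is needed. Harder, because the proposition is asserted uniformly for all $\eps\in(0,1)$, and the IFT only buys you $\eps\in(0,\eps_0)$. To cover $[\eps_0,1)$ you would still need an $\eps$-independent bound on $\|u\|_{C^0}$, and the natural way to get one is precisely the paper's energy argument above. So your approach, completed, ends up borrowing the paper's crude estimate anyway; the paper's direct two-step bootstrap is shorter and delivers the full range in one pass.
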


The proof relies on the following observation. Choose and fix $D$, some effective divisor of degree $n$ on $\Sigma$. Then any section $\phi$ of $L$ with $(\phi)=D$ has a unique representative in its gauge equivalence class of the form
\beq
\phi=\sqrt\eps e^{u/2}\wh{\phi}_D
\eeq
where $u:\Sigma\ra\R$ is a smooth function. If we define the connexion $A$ to be
\beq
A=\wh{A}-\frac12*\d u,
\eeq
then the pair $(\phi,A)$ satisfies the first vortex equation \eqref{v1} \cite{flospe}. The connexion $A$ has curvature
\beq\label{gra}
*F_A=*F_{\wh{A}}-\frac12 *\d*\d u
=\frac{2\pi n}{|\Sigma|}+\frac12\Delta u.
\eeq
Hence, $(\phi,A)$ satisfies the second vortex equation \eqref{v2} if and only if
\beq\label{hmv}
\Delta u-\frac{\eps}{|\Sigma|}+\eps|\wh\phi_D|^2e^u
=0.
\eeq
It follows from the main theorem of \cite{bra}, or by direct appeal to \cite{kazwar}, that the solution to \eqref{hmv} exists and is unique and smooth. In this way we reduce the problem of constructing the vortex with divisor $D$ to a semilinear elliptic PDE for $u$. 

The error in the approximation 
\beq
(\phi,A)=(\sqrt{\eps}e^{u/2}\wh\phi_D,\wh{A}-\frac12*\d u)
\approx (\sqrt{\eps}\wh\phi_D,\wh{A})
\eeq
 is controlled by $u$, and hence we seek bounds on this function.
We will use four different norms on $C^\infty(\Sigma)$, namely,
\bea
\|f\|^2_{L^2}&:=&\int_\Sigma |f|^2\nonumber\\
\|f\|^2_{H^1}&:=&\int_\Sigma (|f|^2+|\d f|^2)\nonumber\\
\|f\|_{H^2}^2&:=&\int_\Sigma (|f|^2+|\d f|^2+|\nabla\d f|^2)\nonumber \\
\|f\|_{C^0}&:=&\sup\{|f(x)|:x\in \Sigma\}
\eea
and will denote by $\ip{\cdot,\cdot}_{L^2}$ the $L^2$ inner product.
In the defintion of the $H^2$ norm, $\nabla$ denotes the Levi-Civita connexion associated to $g_\Sigma$. We will also make frequent use of two standard analytic facts which, for convenience, we collect here \cite{donkro,aub}:

\begin{prop}\label{sobprop} There exists a constant $c>0$, depending only on $g_\Sigma$, such that
\bem
\ii For all $f\in C^\infty(\Sigma)$, $\|f\|_{C^0}\leq c\|f\|_{H^2}$ (Sobolev embedding);
\ii For all $f\in C^\infty(\Sigma)$ with $\int_\Sigma f=0$,
$\|f\|_{H^2}\leq c\|\Delta f\|_{L^2}$ (standard elliptic estimate for $\Delta$).
\eem
\end{prop}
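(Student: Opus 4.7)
Both claims are classical facts about compact Riemannian manifolds; since $\Sigma \cong S^2$ is compact and $g_\Sigma$ is a fixed smooth metric, all constants that appear below will depend only on $g_\Sigma$. I would treat (i) and (ii) separately via standard arguments.

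\textbf{For (i)}, I would reduce to the Euclidean Sobolev embedding on $\R^2$. Cover $\Sigma$ by finitely many coordinate charts $(U_\alpha,\varphi_\alpha)$ with $\varphi_\alpha(U_\alpha)\subset\R^2$ a bounded domain, and fix a subordinate smooth partition of unity $\{\rho_\alpha\}$. For $f\in C^\infty(\Sigma)$, apply the Euclidean inequality $\|g\|_{C^0}\leq c_{\rm eucl}\|g\|_{W^{2,2}}$ (which holds because $2\cdot 2>\dim\Sigma=2$, whence $W^{2,2}\hookrightarrow C^{0,\alpha}$ for every $\alpha<1$ on bounded Lipschitz domains in $\R^2$) to the Euclidean pullback $(\rho_\alpha f)\circ\varphi_\alpha^{-1}$. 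Since $g_\Sigma$ is smooth on the compact manifold $\Sigma$, on each chart the Euclidean $W^{2,2}$-norm is equivalent to a Riemannian $H^2$-norm of $\rho_\alpha f$, and the product rule together with $\|\rho_\alpha\|_{C^2}<\infty$ yields $\|\rho_\alpha f\|_{H^2(\Sigma)}\leq C\|f\|_{H^2(\Sigma)}$. A finite sum over $\alpha$, using $\sup_\Sigma|f|\leq\sum_\alpha\sup_\Sigma|\rho_\alpha f|$, gives (i).

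\textbf{For (ii)}, I would combine two ingredients: a Poincar\'e-Wirtinger inequality and the Bochner-Weitzenb\"ock formula for $1$-forms. From \eqref{gra} the Laplacian convention is $\Delta=-*\d*\d$ on functions, so $\Delta$ is nonnegative self-adjoint with $\ip{f,\Delta f}_{L^2}=\|\d f\|_{L^2}^2$. Compactness of $\Sigma$ yields discrete spectrum whose smallest positive eigenvalue $\lambda_1>0$ provides $\|f\|_{L^2}^2\leq\lambda_1^{-1}\|\d f\|_{L^2}^2$ for mean-zero $f$; combined with $\|\d f\|_{L^2}^2=\ip{f,\Delta f}_{L^2}\leq\|f\|_{L^2}\|\Delta f\|_{L^2}$ by Cauchy-Schwarz, this iterates to $\|f\|_{L^2}\leq\lambda_1^{-1}\|\Delta f\|_{L^2}$ and $\|\d f\|_{L^2}\leq\lambda_1^{-1/2}\|\Delta f\|_{L^2}$. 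Next, since $\omega:=\d f$ is closed, pairing the Weitzenb\"ock identity $\d\d^*\omega=\nabla^*\nabla\omega+\ric(\omega)$ with $\omega$ in $L^2$ and integrating by parts gives
\[
\int_\Sigma(\Delta f)^2=\int_\Sigma|\nabla\d f|^2+\int_\Sigma\ric(\d f,\d f),
\]
whence $\|\nabla\d f\|_{L^2}^2\leq\|\Delta f\|_{L^2}^2+\|\ric\|_{C^0}\|\d f\|_{L^2}^2$. Summing the three bounds yields $\|f\|_{H^2}^2\leq c^2\|\Delta f\|_{L^2}^2$ with $c$ depending only on $\lambda_1$ and $\|\ric\|_{C^0}$, hence only on $g_\Sigma$.

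Since both parts are textbook material, there is no substantive obstacle; the only point requiring attention is bookkeeping the sign convention for $\Delta$ (making sure the curvature term in the Weitzenb\"ock step enters with the sign shown above) and verifying that every implicit constant depends only on the fixed metric $g_\Sigma$ and not on $\eps$, $\tau$, or the divisor $D$.
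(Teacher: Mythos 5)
Your proof is correct. Note, however, that the paper does not prove Proposition \ref{sobprop} at all: it is presented as a pair of standard facts and simply referred to the literature (Donaldson--Kronheimer and Aubin), so there is no in-paper argument to compare against. What you supply is a legitimate self-contained derivation. For (i) your chart-and-partition-of-unity reduction to the Euclidean embedding $W^{2,2}(\R^2)\hookrightarrow C^0$ is exactly the standard construction behind the cited result, and you correctly observe that compact support of $(\rho_\alpha f)\circ\varphi_\alpha^{-1}$ lets you invoke the whole-space embedding with a constant depending only on the finite atlas, hence only on $g_\Sigma$. For (ii) your route is the more elegant one: rather than patching together local Calder\'on--Zygmund estimates (which is how the general elliptic estimate is usually proved in the references), you exploit the fact that the operator is the Laplace--Beltrami operator and get the top-order bound globally from the Bochner--Weitzenb\"ock identity $\d\delta\,\d f=\nabla^*\nabla\,\d f+\ric(\d f)$ for the closed form $\d f$, with the lower-order terms controlled by the Poincar\'e inequality via $\lambda_1$. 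You have the sign conventions right ($\Delta=-*\d*\d=\delta\d\geq 0$, matching \eqref{gra}), the curvature term enters with the correct sign, and every constant you produce depends only on $\lambda_1(g_\Sigma)$, $\|\ric\|_{C^0}$ and the fixed atlas, i.e.\ only on $g_\Sigma$, which is precisely the uniformity the paper needs in Lemmas \ref{lem1} and \ref{lem2}. No gaps.
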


Proposition \ref{vortexprop} will quickly follow once we have shown that $\|u\|_{C^0}\leq C\eps$ uniformly in $D$. We begin by bounding the average of $u$ in terms of its zero-average component.
 
\begin{lemma}\label{lem0} Let $u$ be a solution of \eqref{hmv} and $u_0=u-\ol{u}$, where 
$$
\ol{u}:=\frac{1}{|\Sigma|}\int_\Sigma u.
$$
Then $|\ol{u}|\leq\|u_0\|_{C^0}$.
\end{lemma}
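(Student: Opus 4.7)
The plan is to derive an exact formula for $\bar{u}$ in terms of $u_0$ by exploiting the integral of the second vortex equation, then use the fact that $|\wh\phi_D|^2$ is a probability density to bound $\bar u$.

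First I would integrate \eqref{hmv} over $\Sigma$. The divergence theorem kills the $\int_\Sigma \Delta u$ term, and $\int_\Sigma \eps/|\Sigma| = \eps$, leaving the single identity
\beq
\int_\Sigma |\wh\phi_D|^2 e^{u}=1.
\eeq
(This is of course just the constraint \eqref{bradconstraint} rewritten for the ansatz $\phi=\sqrt{\eps}\,e^{u/2}\wh\phi_D$, which reproduces the normalization $\|\wh\phi_D\|_{L^2}=1$.)

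Next, writing $u=\bar u+u_0$ and pulling the constant factor $e^{\bar u}$ out of the integral, this identity becomes
\beq
e^{\bar u}=\frac{1}{\int_\Sigma |\wh\phi_D|^2 e^{u_0}},
\qquad\text{i.e.,}\qquad
\bar u = -\log\!\left(\int_\Sigma |\wh\phi_D|^2 e^{u_0}\right).
\eeq
Because $\|\wh\phi_D\|_{L^2}^2=\int_\Sigma |\wh\phi_D|^2 = 1$, the measure $|\wh\phi_D|^2\,d\vol$ is a probability measure on $\Sigma$, so $\int_\Sigma |\wh\phi_D|^2 e^{u_0}$ is a weighted average of $e^{u_0}$ and hence is sandwiched between $\min_\Sigma e^{u_0}=e^{-\|u_0\|_{C^0}}$ (attained if $u_0\equiv \mathrm{const}$, say) and $\max_\Sigma e^{u_0}=e^{\|u_0\|_{C^0}}$. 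Taking logarithms yields $|\bar u|\leq \|u_0\|_{C^0}$, which is the claim.

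There is no real obstacle here: the whole argument is essentially the observation that a weighted mean of $e^{u_0}$ with unit-mass weight $|\wh\phi_D|^2$ is controlled by the $C^0$ norm of $u_0$. The only subtle point is remembering to use the normalization $\|\wh\phi_D\|_{L^2}=1$ built into the definition of the pseudo-vortex; without it the weighting would not be a probability measure and the estimate would pick up an extra constant.
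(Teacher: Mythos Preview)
Your argument is correct and is essentially identical to the paper's own proof: integrate \eqref{hmv} to obtain $\bar u=-\log\int_\Sigma|\wh\phi_D|^2e^{u_0}$, then sandwich the integral between $e^{-\|u_0\|_{C^0}}$ and $e^{\|u_0\|_{C^0}}$ using $\|\wh\phi_D\|_{L^2}=1$. Your phrasing in terms of a probability measure is a nice way to package the same estimate; the only minor slip is writing $\min_\Sigma e^{u_0}=e^{-\|u_0\|_{C^0}}$ as an equality, whereas one really has $\min_\Sigma e^{u_0}\geq e^{-\|u_0\|_{C^0}}$, but this is exactly the inequality you need.
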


\begin{proof} Substituting $u=\ol{u}+u_0$ into \eqref{hmv} and integrating over $\Sigma$ we find (by the Divergence Theorem)
\beq
0-\eps+\eps\int_\Sigma |\wh\phi_D|^2e^{\ol{u}}e^u_0=0,
\eeq
which may be solved for $\ol{u}$,
\beq
\ol{u}=-\log\left(\int_{\Sigma}|\wh{\phi}_D|^2e^{u_0}\right).
\eeq
Now $-\|u_0\|_{C^0}\leq u_0\leq \|u_0\|_{C^0}$ and
$\exp$ is increasing, so
\beq
e^{-\|u_0\|_{C^0}}=e^{-\|u_0\|_{C^0}}\int_{\Sigma}|\wh{\phi}_D|^2\leq
\int_{\Sigma}|\wh{\phi}_D|^2e^{u_0}\leq 
e^{\|u_0\|_{C^0}}\int_{\Sigma}|\wh{\phi}_D|^2=e^{\|u\|_{C^0}},
\eeq
where we have used the fact that $\|\wh{\phi}_D\|_{L^2}=1$. But $\log$ is also increasing, so
\beq
-\|u_0\|_{C^0}\leq \log\left(\int_{\Sigma}|\wh{\phi}_D|^2e^{u_0}\right)
\leq \|u_0\|_{C^0}
\eeq
whence the claim immediately follows. \end{proof}

This observation quickly yields a crude bound on 
$\|u\|_{C^0}$.
 
\begin{lemma}\label{lem1} There exists $C>0$, depending only on $n$ and $g_\Sigma$, such that, for all $D$, the solution $u$ of \eqref{hmv} satisfies 
$\|u\|_{C^0}\leq C$.
\end{lemma}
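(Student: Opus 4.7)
The plan is to chain the bounds of Lemma \ref{lem0} and Proposition \ref{sobprop} to reduce the claim to a uniform $L^2$ estimate on $\Delta u$, and then obtain such an estimate from the classical pointwise vortex bound $|\phi|^2\leq \tau$. Concretely, Lemma \ref{lem0} gives $\|u\|_{C^0}\leq 2\|u_0\|_{C^0}$, and since $u_0$ has zero mean with $\Delta u_0=\Delta u$, Proposition \ref{sobprop} (Sobolev embedding followed by the elliptic estimate for $\Delta$) yields $\|u_0\|_{C^0}\leq c^2\|\Delta u\|_{L^2}$. Rewriting \eqref{hmv} as $\Delta u = \eps/|\Sigma| - |\phi|^2$, using $|\phi|^2=\eps|\wh\phi_D|^2 e^u$, the upper bound $\Delta u\leq\eps/|\Sigma|$ is immediate from positivity of $|\phi|^2$, and the real task becomes a uniform pointwise upper bound on $|\phi|^2$.

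For this I will apply the maximum principle to $v:=\log|\phi|^2 = \log\eps+\log|\wh\phi_D|^2+u$. This quantity is smooth on $\Sigma\setminus D$, with $D=(\wh\phi_D)$, and tends to $-\infty$ on $D$, so it attains its supremum at some $p\in\Sigma\setminus D$. Since $\wh\phi_D$ is holomorphic with respect to $\ol\cd_{\wh A}$, the Poincar\'e--Lelong identity gives $\Delta\log|\wh\phi_D|^2 = 2{*}F_{\wh A} = 4\pi n/|\Sigma|$ on $\Sigma\setminus D$. Substituting into \eqref{hmv} at $p$ and using $\Delta v(p)\geq 0$ yields
$$0\leq \frac{4\pi n}{|\Sigma|}+\Delta u(p) = \frac{4\pi n+\eps}{|\Sigma|}-|\phi(p)|^2 = \tau-|\phi(p)|^2,$$
hence $|\phi|^2\leq\tau$ everywhere. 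This is the main (but standard) technical step.

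Combining, $|\Delta u|\leq\max(\eps/|\Sigma|,\tau)\leq(4\pi n+2)/|\Sigma|$ uniformly (using $\eps<1$ and $\tau<(4\pi n+1)/|\Sigma|$), so $\|\Delta u\|_{L^2}\leq(4\pi n+2)|\Sigma|^{-1/2}$, and the chain of estimates produces a constant $C$ depending only on $n$, $|\Sigma|$ and the Sobolev/elliptic constant of Proposition \ref{sobprop}, i.e.\ only on $n$ and $g_\Sigma$. This crude bound is exactly what is needed for the bootstrap in the proof of Proposition \ref{vortexprop}: once $\|u\|_{C^0}=O(1)$, the factor $e^u$ in \eqref{hmv} is $O(1)$, and combined with the uniform $C^0$ bound on unit-$L^2$ sections of the finite-dimensional space $H^0(L)$, one then obtains $\|\Delta u\|_{L^2}=O(\eps)$ and hence the sharper estimate $\|u\|_{C^0}=O(\eps)$.
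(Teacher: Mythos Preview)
Your proof is correct, and the overall architecture matches the paper's: reduce to an $L^2$ bound on $\Delta u_0=\Delta u$, then apply Proposition \ref{sobprop} and Lemma \ref{lem0}. The divergence is in how that $L^2$ bound is obtained.

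The paper does not use the pointwise bound $|\phi|^2\leq\tau$. Instead it exploits \eqref{gra} to write $\frac12\Delta u_0=*F_A-2\pi n/|\Sigma|$ and then bounds $\|F_A\|_{L^2}^2$ directly by the Bogomol'nyi energy: since $(\phi,A)$ is a vortex, $\|F_A\|_{L^2}^2\leq 2E(\phi,A)=2\pi\tau n$, which is uniformly bounded by the standing assumption $\tau<(1+4\pi n)/|\Sigma|$. This is shorter and uses nothing beyond what is already set up in Section \ref{sec2}. Your route, by contrast, proves the classical estimate $|\phi|^2\leq\tau$ via the maximum principle and the Poincar\'e--Lelong identity $\Delta\log|\wh\phi_D|^2=2*F_{\wh A}$ on $\Sigma\setminus D$; this requires bringing in an extra (standard) identity, but it buys you a pointwise bound $|\Delta u|\leq\tau$, strictly stronger than the $L^2$ bound the paper obtains, and it makes the mechanism behind the later improvement to $\|u\|_{C^0}=O(\eps)$ more transparent. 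Either argument is perfectly adequate here.
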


\begin{proof}
As before, let $u=\ol{u}+u_0$ with $\ol{u}=|\Sigma|^{-1}\int_\Sigma u$. 
Now, by \eqref{gra},
\bea
\frac14\|\Delta u_0\|_{L^2}^2&=&\|*F_A-2\pi n|\Sigma|^{-1}\|_{L^2}^2\nonumber \\
&=& \|F_A\|_{L^2}^2-\frac{(2\pi n)^2}{|\Sigma|}
\nonumber \\
&<& \|F_A\|_{L^2}^2
\nonumber \\
&\leq& 2E(\phi,A) \nonumber \\
&= & 2\pi\tau n\leq 2\pi n(1+4\pi n)/|\Sigma|.
\eea
Hence, $\|\Delta u_0\|_{L^2}\leq C_1$, some constant depending only on $n$ and $g_{\Sigma}$. 
Now $\int_\Sigma u_0=0$ so, by the standard elliptic estimate for $\Delta$ (Proposition \ref{sobprop}), 
\beq\label{dor}
\|u_0\|_{H^2}\leq c\|\Delta u_0\|_{L^2}\leq cC_1,
\eeq
and hence, by the Sobolev embedding (Proposition \ref{sobprop}),
\beq
\|u_0\|_{C^0}\leq c^2C_1.
\eeq
Hence, by Lemma \ref{lem0},
\beq
\|u\|_{C^0}\leq |\ol{u}|+\|u_0\|_{C^0}
\leq 2\|u_0\|_{C^0}\leq 2c^2C_1.
\eeq
\end{proof} 

Equation \eqref{hmv} allows us to improve this crude bound.

\begin{lemma}\label{lem2} There exists $C>0$, depending only on $(L,h)$ and $g_\Sigma$, such that, for all $D$, the solution $u$ of \eqref{hmv} satisfies 
$\|u\|_{C^0}\leq C\eps$.
\end{lemma}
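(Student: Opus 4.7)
The plan is to bootstrap from the crude bound of Lemma \ref{lem1} by exploiting the structure of the PDE \eqref{hmv}, exactly as the author hints. Rewriting \eqref{hmv} gives
\beq
\Delta u = \eps\left(|\wh\phi_D|^2 e^u - \frac{1}{|\Sigma|}\right),
\eeq
so the whole Laplacian of $u$ is linear in $\eps$ with a coefficient that, thanks to Lemma \ref{lem1}, is bounded uniformly in $D$. I would first note that $H^0(L)$ is a finite-dimensional complex vector space, so all norms on it are equivalent; in particular there is a constant $K$, depending only on $(L,h)$ and $g_\Sigma$, with $\|\psi\|_{C^0}\leq K\|\psi\|_{L^2}$ for every $\psi\in H^0(L)$. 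Applied to $\wh\phi_D$, this gives $\||\wh\phi_D|^2\|_{C^0}\leq K^2$, uniformly in $D$.

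Next I would combine this with Lemma \ref{lem1}: writing $C_0$ for its constant, we have $e^u\leq e^{C_0}$ pointwise, and hence
\beq
\|\Delta u\|_{L^2}\leq \eps\left(K^2 e^{C_0}+\frac{1}{|\Sigma|}\right)|\Sigma|^{1/2}=:C_2\eps,
\eeq
with $C_2$ depending only on $n$, $(L,h)$ and $g_\Sigma$. Decomposing $u=\ol{u}+u_0$ as in Lemma \ref{lem0}, the elliptic estimate of Proposition \ref{sobprop}(ii) and $\Delta u=\Delta u_0$ give $\|u_0\|_{H^2}\leq c C_2\eps$, and Sobolev embedding (Proposition \ref{sobprop}(i)) upgrades this to $\|u_0\|_{C^0}\leq c^2 C_2\eps$.

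Finally, Lemma \ref{lem0} yields $|\ol{u}|\leq\|u_0\|_{C^0}\leq c^2 C_2\eps$, so
\beq
\|u\|_{C^0}\leq |\ol{u}|+\|u_0\|_{C^0}\leq 2c^2 C_2\eps=:C\eps,
\eeq
proving the lemma. The only nontrivial ingredient is the uniform-in-$D$ control of $|\wh\phi_D|^2$; I expect this to be the step a careful reader would scrutinize, and the equivalence of norms on the finite-dimensional space $H^0(L)$ is what makes it trivially available. Everything else is linear bookkeeping: the Laplacian is $O(\eps)$, the zero-average part of $u$ is $O(\eps)$ in $C^0$ by elliptic regularity and Sobolev, and Lemma \ref{lem0} transfers this bound to the average, completing the bootstrap.
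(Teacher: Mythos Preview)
Your proof is correct and follows essentially the same route as the paper: bound $\|\Delta u\|_{L^2}$ directly from \eqref{hmv} using Lemma~\ref{lem1}, then apply the elliptic estimate, Sobolev embedding, and Lemma~\ref{lem0}. The only cosmetic difference is your justification of the uniform bound $\|\wh\phi_D\|_{C^0}\leq K$: the paper invokes compactness of $\Sigma\times\Sym_n\Sigma$ to define $\alpha=\sup_{(p,D)}|\wh\phi_D(p)|$, whereas you use equivalence of norms on the finite-dimensional space $H^0(L)$ --- arguably the cleaner way to see it.
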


\begin{proof}
Once again, by Proposition \ref{sobprop} and Lemma \ref{lem0}, it suffices to show that
\beq
\|\Delta u_0\|_{L^2}\leq C\eps,
\eeq
where $u_0$ is the zero-average part of $u$. It is convenient to define
\beq\label{alphadef}
\alpha:=\sup\{|\wh{\phi}_D(p)|:(p,D)\in\Sigma\times\Sym_n\Sigma\},
\eeq
noting that this number certainly exists, by continuity and compactness of $\Sigma$, and is, by definition, dependent only on $(L,h)$ and $g_\Sigma$.
Now, by equation \eqref{hmv},
\beq
\|\Delta u_0\|_{L^2}\leq \frac{\eps}{|\Sigma|^{1/2}}
+\eps\alpha^2e^{\|u\|_{C^0}}|\Sigma|^{1/2}\leq C\epsilon,
\eeq
by Lemma \ref{lem1}.
\end{proof}

\begin{lproof}{of Proposition \ref{vortexprop}}
By definition of $u$,
\beq
\frac{\phi}{\sqrt{\eps}}-\wh{\phi}_D
=\wh{\phi}_D(1-e^{u/2}).
\eeq
By the mean value theorem, for each $x\in\Sigma$, there exists
$q(x)$ between $u(x)$ and $0$ such that
\beq
e^{u(x)/2}-e^{0/2}=\frac12 e^{q(x)/2}(u(x)-0).
\eeq
Hence
\beq
\|1-e^{u/2}\|_{C^0}\leq \frac12 e^{\|u\|_{C^0}/2}\|u\|_{C^0},
\eeq
and so,
\beq
\left\|\frac{\phi}{\sqrt{\eps}}-\wh{\phi}_D\right\|_{C^0}
\leq \frac12 \alpha\|u\|_{C^0}e^{\|u\|_{C^0}/2},
\eeq
where $\alpha$ is the constant defined in \eqref{alphadef}. Similarly, $*(F_A-F_{\wh{A}})=\Delta u/2$, so by \eqref{hmv},
\beq
\|F_A-F_{\wh{A}}\|_{C^0}\leq
\frac\eps2\left(\frac{1}{|\Sigma|}+\alpha^2 e^{\|u\|_{C^0}}\right).
\eeq
The claimed inequalities now follow immediately from Lemma \ref{lem2}.
\end{lproof}

\section{Convergence of the metric}\label{sec4}
\news

Consider a smooth curve of solutions of the vortex equations,
$(\phi(t),A(t))$, and denote by $v$ the tangent vector to $M_n$ at $[(\phi(0),A(0))]$ that it generates. Given any such curve, there exists a smooth curve $\wh\phi(t)$ in the unit sphere $S\subset (H^0(L),
\|\cdot\|_{L^2})$ such that
\bea\label{jg}
\phi(t)&=&\sqrt{\eps}\wh\phi(t)e^{\frac12 u(t)+i\chi(t)}\\
\label{joga}
A(t)&=&\wh{A}+\frac12*d u(t)+\d\chi(t)
\eea
where $u(t)$, $\chi(t)$ are smooth curves in 
$C^\infty(\Sigma)$ and, at each time $t$, $u(t)$ satisfies the PDE \eqref{hmv}. We may choose $\chi(t)$ freely: changing it merely gauge transforms each solution $(\phi(t),A(t))$.
Note that $\|\wh\phi(t)\|_{L^2}^2\equiv 1$, so
\beq\label{hopf1}
\ip{\wh{\phi}(t),\dot{\wh{\phi}}(t)}_{L^2}\equiv 0,
\eeq
where here and henceforth an overdot denotes differentiation with respect to $t$.
Without loss of generality, we may also assume that
\beq\label{hopf2}
\ip{i\wh\phi(t),\dot{\wh{\phi}}(t)}_{L^2}\equiv 0.
\eeq
If not, we gauge transform
\beq
\wh{\phi}(t)\mapsto e^{ic(t)}\wh{\phi}(t)
\eeq
by a suitable curve of space-independent functions $c(t)$ and redefine 
$\chi(t)\mapsto \chi(t)-c(t)$. This is convenient because $\wh{\phi}(t)$ then moves orthogonal to the fibres of the Hopf fibration, so by definition of the Fubini-Study metric,
\beq
g_0(v,v)=g_{FS}([\dot{\wh{\phi}}(0)],[\dot{\wh{\phi}}(0)])=\|\dot{\wh{\phi}}(0)\|_{L^2}^2.
\eeq

This, then, is the squared length assigned to $v$ by the Fubini-Study metric $g_0$. We wish to compare this with the squared length of the same vector with
respect to the $L^2$ metric. For this purpose, it is convenient to choose $\chi(t)$ so that $(\dot\phi(0),\dot{A}(0))$ is $L^2$ orthogonal to the gauge orbit through $(\phi(0),A(0))$, and hence $P(\dot\phi(0),\dot{A}(0))=(\dot\phi(0),\dot{A}(0))$. 
We may take $\chi(0)=0$. A general gauge transform is
\beq
(\phi,A)\mapsto (e^{is}\phi,A+\d s),\qquad s\in C^\infty(\Sigma),
\eeq
so a general tangent vector to the gauge orbit through $(\phi,A)$ takes the form
\beq
v=(is\phi,\d s)\in\Gamma(L)\oplus\Omega^1(\Sigma),\qquad s\in C^\infty(\Sigma).
\eeq
The tangent vector to our curve is
\beq
(\dot\phi(0),\dot A(0))=
(
\sqrt\eps(
\dot{\wh\phi}(0)+\wh\phi(0)(
\frac12\dot u(0)+i\dot\chi(0))
)e^{u(0)/2},
\frac12*\d \dot u(0)+\d\dot\chi(0)).
\eeq
Hence we require that, for all $s\in C^\infty(\Sigma)$,
\bea
0&=&\ip{
is\sqrt\eps \wh\phi(0)e^{u(0)/2},
\sqrt\eps(\dot{\wh\phi}(0)+\wh\phi(0)(\frac12\dot u(0)
+i\dot\chi(0))e^{u(0)/2})}_{L^2}
+\ip{\d s,\frac12*\d \dot u(0)+\d\dot\chi(0)}_{L^2}\nonumber \\
&=&\int_\Sigma s\eps e^{u(0)}h(i\wh\phi(0),\dot{\wh\phi}(0)+\frac12 \dot{u}(0)\wh\phi(0)+i\dot\chi(0)\wh\phi(0))+\ip{s,\frac12\dstar*\d\dot{u}(0)+\dstar\d\dot\chi(0)}_{L^2}\nonumber \\
&=&\ip{s,\Delta\dot\chi(0)+\eps|\wh\phi(0)|^2\dot\chi(0)+\eps e^{u(0)}h(i\wh\phi(0),\dot{\wh\phi}(0))}_{L^2}.
\eea
Thus, the gauge orthogonality condition is
\beq\label{gperp}
\Delta\dot\chi(0)+\eps e^{u(0)}|\wh{\phi}(0)|^2\dot\chi(0)=-\eps e^{u(0)}
h(i\wh\phi(0),\dot{\wh{\phi}}(0)).
\eeq
Since $u(t)$ satisfies \eqref{hmv} at each $t$, we find, by differentiating with respect to $t$, that
\beq\label{hmvlin}
\Delta \dot{u}(0)+\eps e^{u(0)}|\wh{\phi}(0)|^2\dot u(0)=-2\eps e^{u(0)}
h(\wh\phi(0),\dot{\wh{\phi}}(0)).
\eeq
The conditions on our curve of solutions imply that $\dot\chi(0)$ and $\dot  u(0)$ satisfy the driven linear PDEs \eqref{gperp} and \eqref{hmvlin}, and, conversely, given $(\wh\phi(0),\dot{\wh{\phi}}(0))$, these PDEs uniquely determine $\dot\chi(0)$ and $\dot u(0)$.
From now on, we restrict attention to the time $t=0$ and drop the parameter $t$ from our notation.

The squared length of the velocity vector with respect to the $L^2$ metric
is
\bea
g(v,v)&=&\|\dot{\phi}\|_{L^2}^2
+\|\dot{A}\|_{L^2}^2\nonumber \\
&=&\eps\int_\Sigma e^{u}\left\{ |\dot{\wh\phi}|^2+h(\dot{\wh\phi},\wh\phi)\dot{u}+2h(\dot{\wh\phi},i\wh\phi)\dot\chi+
|\wh\phi|^2\left(\frac{\dot{u}^2}{4}+\dot\chi^2\right)\right\}\nonumber \\ \label{y-cl}
&& +\frac14\|\d\dot{u}\|_{L^2}^2+\|\d\dot\chi\|_{L^2}^2.
\eea
  Our aim is to show that the normalized $L^2$ metric $g_\eps=\eps^{-1}g$ satisfies a bound of the form
 \beq
|g_\eps(v,v)-\|\dot{\wh{\phi}}\|_{L^2}^2|\leq C\eps\|\dot{\wh{\phi}}\|_{L^2}^2,
\eeq
where $C$ is independent of $v$. Comparing with \eqref{y-cl} it is clear that we must control suitable norms of $\dot u$ and $\dot\chi$.

Note that both $v=\dot\chi$ and $v=\dot u/2$ satisfy a PDE of the form
\beq\label{vpde}
\Delta v+ av=b
\eeq
where $a,b\in C^\infty(\Sigma)$, 
\beq\label{abdef}
a=\eps|\wh\phi|^2e^u,\qquad
b=\left\{\begin{array}{cc}
-\eps e^uh(i\wh\phi,\dot{\wh{\phi}}), & v=\dot\chi,\\
-\eps e^uh(\wh\phi,\dot{\wh{\phi}}), & v=\dot u/2.\end{array}\right.
\eeq
An obvious strategy at this point would be to use the standard elliptic estimate for the operator $\Delta +a$ to bound $\|v\|_{H^2}$ in terms of $\|b\|_{L^2}$. Indeed, since $a\geq 0$ and vanishes only on a finite set, it is clear that the kernel of $ \Delta+a$ is trivial,
so a bound of the form $\|v\|_{H^2}\leq C(a)\|b\|_{L^2}$ is immediately available. The problem with this approach is that the dependence of the coercivity constant $C(a)$ on the function $a$ (hence on $\wh\phi$) is unknown, so it does not yield a uniform bound as we require. 

To proceed further, we appeal to an estimate obtained from the Lax-Milgram Lemma which gives a bound on $\|v\|_{H^1}$ (not $\|v\|_{H^2}$) in terms of $a$ and $b$. This estimate is likely to be useful in a wide variety of contexts, so we state and prove it in some generality.
At first sight, the bound looks (perhaps needlessly) elaborate, and one is tempted to replace the right hand side by a simpler but less sharp expression. Our argument will require the full detail of the bound as stated, however.

\begin{lemma} \label{LMlem} Let $M$ be a compact Riemannian manifold. Then there exists a constant $C>0$, depending only on $M$, such that, for all $a,b,v\in C^\infty(M)$ with $a\geq 0$ and $\int_M a>0$,  if
$$
 \Delta v+ av=b,
$$
then
$$
\|v\|_{H^1}\leq 
C
\left\{
	\left(
		1+\frac{\|a\|_{L^2}}{\int_Ma}
	\right)
	\left(
			\|b\|_{L^2}+\frac{\|a\|_{L^2}}{\int_Ma}\left|\int_M b\right|
	\right)
	+\frac{|\int_M b|}{\int_M a}
\right\}.
$$
\end{lemma}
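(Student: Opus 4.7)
The plan is to reduce the estimate to a standard Poincaré-type argument by subtracting off an explicit constant from $v$. Set
\[
\mu := \frac{\int_M b}{\int_M a},\qquad w := v - \mu.
\]
Then $w$ satisfies $\Delta w + a w = \tilde b$ with $\tilde b := b - a\mu$, and the crucial observation is that $\int_M \tilde b = \int_M b - \mu\int_M a = 0$. In addition,
\[
\|\tilde b\|_{L^2} \leq \|b\|_{L^2} + \frac{\|a\|_{L^2}}{\int_M a}\left|\int_M b\right|,
\]
which is exactly the second factor appearing in the claimed bound.

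Next, taking the $L^2$ inner product of the $w$-equation with $w$ itself and integrating by parts, one obtains
\[
\|\d w\|_{L^2}^2 + \int_M a w^2 = \int_M \tilde b\, w.
\]
Since $\int_M \tilde b = 0$, the right-hand side is unchanged if $w$ is replaced by its zero-mean part $w_0 := w - \bar w$. Cauchy--Schwarz together with the Poincaré inequality on $M$ (with constant $c_P$ depending only on $M$) then gives $\left|\int_M \tilde b\, w\right| \leq c_P\|\tilde b\|_{L^2}\|\d w_0\|_{L^2} = c_P\|\tilde b\|_{L^2}\|\d w\|_{L^2}$. Discarding the non-negative term $\int_M a w^2$ and absorbing one factor of $\|\d w\|_{L^2}$ yields $\|\d w\|_{L^2} \leq c_P\|\tilde b\|_{L^2}$, and a second application of Poincaré gives $\|w_0\|_{L^2} \leq c_P^2\|\tilde b\|_{L^2}$.

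It remains to reconstruct $v$. Writing $v = \bar v + v_0$ with $v_0$ of zero mean, we have $v_0 = w_0$ and $\d v = \d w$, so these norms are already controlled. To estimate $\bar v$, integrate the original PDE over $M$: the Laplacian term drops out, leaving $\bar v\int_M a + \int_M a v_0 = \int_M b$, whence
\[
|\bar v| \leq \frac{|\int_M b|}{\int_M a} + \frac{\|a\|_{L^2}}{\int_M a}\,\|v_0\|_{L^2}
\]
by Cauchy--Schwarz. Combining $\|v\|_{H^1} \leq |\bar v|\sqrt{|M|} + \|v_0\|_{L^2} + \|\d v\|_{L^2}$ with the preceding estimates and the bound on $\|\tilde b\|_{L^2}$ assembles exactly into the stated inequality, with $C$ depending only on $c_P$ and $|M|$.

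The main subtlety -- and the reason the lemma is proved by hand rather than by an abstract appeal to Lax--Milgram -- is that although the bilinear form $B(u,w) := \int_M \langle \d u,\d w\rangle + \int_M a uw$ is coercive on $H^1$ whenever $\int_M a > 0$, its coercivity constant degenerates uncontrollably as $a$ concentrates, so direct testing of the equation against $v$ itself produces an $a$-dependent factor one cannot extract. The shift $v \mapsto w = v - \mu$ is precisely what removes the offending constant mode, allowing the ($a$-independent) Poincaré inequality on $M$ to do the work; the explicit dependence on $\|a\|_{L^2}/\int_M a$ then appears transparently through the elementary algebraic step relating $\bar v$ to $v_0$.
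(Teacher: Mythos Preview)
Your proof is correct. The argument is essentially the same as the paper's, but packaged a little differently. The paper decomposes $v=\bar v+v_0$, eliminates $\bar v$ to obtain a nonlocal equation for $v_0$,
\[
\Delta v_0 + a\Big(v_0-\frac{\ip{a,v_0}_{L^2}}{\int_M a}\Big)=b-\frac{\int_M b}{\int_M a}\,a,
\]
and then applies Lax--Milgram on the space of zero-mean functions, showing the associated bilinear form is coercive with constant $\lambda_1/(1+\lambda_1)$ independent of $a$. Your shift $w=v-\mu$ with $\mu=\int_M b/\int_M a$ achieves the same effect more directly: it produces a right-hand side $\tilde b$ with $\int_M\tilde b=0$, so that testing against $w$ is automatically testing against $w_0$, and the term $\int_M aw^2\ge 0$ can simply be discarded rather than being controlled via the Cauchy--Schwarz cancellation hidden in the paper's coercivity computation. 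After that, both arguments recover $\bar v$ from the integrated equation in identical fashion. Your route is slightly more elementary in that it avoids both the Lax--Milgram framework and the construction of the nonlocal bilinear form; the paper's route makes the $a$-independence of the coercivity constant more visibly structural. One small inaccuracy in your closing remark: the paper \emph{does} invoke Lax--Milgram, just on the reformulated $v_0$-equation rather than on the original one.
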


\begin{proof} Decompose $v=\ol{v}+v_0$ where 
\beq\label{cd}
\ol{v}=\frac{1}{|M|}\int_M v,
\eeq
so $v_0\in \X=\{f\in H^1: \int_M f=0\}$. We note that $\X$ is a Hilbert space with respect to the $H^1$ inner product. Substituting \eqref{cd} into the PDE for $v$ we find that
\beq\label{clda}
\Delta v_0 +av_0=b-a\ol{v},
\eeq
which, on integration over $M$ yields,
\beq\label{cladan}
\ol{v}=\frac{\int_Mb-\ip{a,v_0}_{L^2}}{\int_M a}.
\eeq
Substituting \eqref{cladan} back into \eqref{clda}, one finds that
\beq\label{claidane}
\Delta v_0 + a\left\{v_0 -\frac{\ip{a,v_0}_{L^2}}{\int_M a}\right\}
=\left\{b-\frac{\int_M b}{\int_M a}a\right\}.
\eeq
It is to \eqref{claidane} that we apply the Lax-Milgram Lemma.

Define the bilinear and linear forms
\bea
{\cal A}:\X\times\X\ra \R,\qquad {\cal A}(p,q)&=&\ip{\d p,\d q}_{L^2}
+\ip{ap,q}_{L^2}-\frac{1}{\int_M a}\ip{a,p}_{L^2}\ip{a,q}_{L^2}\\
{\cal B}:\X\ra\R,\qquad {\cal B}(q)&=&\ip{b-\frac{\int_M b}{\int_M a} a,q}_{L^2}.
\eea
Then, since $v_0$ satisfies \eqref{claidane}, for all $q\in \X$,
\beq
{\cal A}(v_0,q)={\cal B}(q).
\eeq
But ${\cal A}$ is {\em coercive} (with respect to the $H^1$ norm on $\X$) with coercivity constant $\lambda_1/(1+\lambda_1)$, where $\lambda_1>0$ is the smallest positive eigenvalue of $\Delta$, that is, for all $q\in\X$,
\beq\label{kb}
{\cal A}(q,q)\geq  \frac{\lambda_1}{1+\lambda_1}\|q\|_{H^1}^2.
\eeq
To see this, note that
\bea
{\cal A}(q,q)&=& \|\d q\|_{L^2}^2+\|\sqrt{a} q\|_{L^2}^2
-\frac{1}{\int_M a}\ip{\sqrt{a},\sqrt{a} q}_{L^2}^2 \nonumber \\
&\geq& 
\|\d q\|_{L^2}^2+\|\sqrt{a} q\|_{L^2}^2-\frac{1}{\int_M a}\|\sqrt{a}\|_{L^2}^2\|\sqrt{a}q\|_{L^2}^2\nonumber \\
&=& \|\d q\|_{L^2}
\eea
by Cauchy-Schwarz. But $\|\d q\|_{L^2}^2\geq \lambda_1\|q\|_{L^2}^2 $ by the Poincar\'e inequality, so
\beq
\|q\|_{H^1}^2=\|\d q\|_{L^2}^2+\|q\|_{L^2}^2\leq (1+\lambda_1^{-1})\|\d q\|_{L^2}^2,
\eeq
and \eqref{kb} immediately follows.

Hence, by the Lax-Milgram Lemma \cite[p.\ 78]{giltru}, 
\beq\label{kabe}
\|v_0\|_{H^1}\leq \frac{1+\lambda_1}{\lambda_1}\|{\cal B}\|_{\X^*}
\eeq
where
\bea
\|{\cal B}\|_{\X^*}&=&\sup_{q\in\X, \|q\|_{H^1}=1}|{\cal B}(q)|\nonumber \\
&=&\sup_{q\in\X, \|q\|_{H^1}=1}
\left|\ip{b-\frac{\int_Mb}{\int_Ma}a,q}_{L^2}\right|\nonumber \\
&\leq&\sup_{q\in\X, \|q\|_{H^1}=1}\|q\|_{L^2}\left\|
b-\frac{\int_Mb}{\int_Ma}a\right\|_{L^2}\nonumber \\
&\leq&
\left\|
b-\frac{\int_Mb}{\int_Ma}a\right\|_{L^2}\nonumber \\
\label{katbec}
&\leq&\|b\|_{L^2}+\frac{\|a\|_{L^2}}{\int_Ma}\left|\int_M b\right|.
\eea
But then, by \eqref{cladan},
\bea
|\ol{v}|&\leq&\frac{|\int_Mb|}{\int_Ma}+\frac{\|a\|_{L^2}}{\int_Ma}\|v_0\|_{L^2},
\eea
so
\bea
\|v\|_{H^1}&=&\|v_0+\ol{v}\|_{H^1}
\leq \|v_0\|_{H^1}+\sqrt{|M|}|\ol{v}|\nonumber \\
&\leq&\label{katebeck}
\left(1+\sqrt{|M|}\frac{\|a\|_{L^2}}{\int_Ma}\right)\|v_0\|_{H^1}+\sqrt{|M|}\frac{|\int_Mb|}{\int_Ma}.
\eea
Defining $C:=(1+\lambda_1^{-1})\max\{1,\sqrt{|M|}\}$, the claimed estimate now follows from inequalities \eqref{kabe}, \eqref{katbec} and
\eqref{katebeck}.
\end{proof}

We next apply Lemma \ref{LMlem} to the PDEs for $\dot\chi$,
$\dot u$, \eqref{gperp}, \eqref{hmvlin}, to obtain order $\eps$ bounds on
their $H^1$ norms. The last term in the inequality in Lemma \ref{LMlem}, namely $|\int_\Sigma b|/\int_\Sigma a$, looks fatal to this endeavour, since both $a$ and $b$ are of order $\eps$. It will turn out, however, that $\int_\Sigma b$ is actually of order $\eps^2$ due to subtle cancellations in the integral.

\begin{prop}\label{jodgas}
There exists a constant $C>0$, depending only on $(L,h)$ and $g_\Sigma$, such that
$$
\|\dot\chi\|_{H^1},\, \|\dot{u}\|_{H^1}\leq C\eps\|\dot{\wh\phi}\|_{L^2}.
$$
\end{prop}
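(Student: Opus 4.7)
The plan is to apply Lemma \ref{LMlem} directly to the two PDEs \eqref{gperp} and \eqref{hmvlin}, with $M=\Sigma$, identifying $a=\eps|\wh\phi|^2 e^u$ and $b$ as one of the two right-hand sides in \eqref{abdef}. Since the Fubini-Study metric does not depend on $\wh\phi$ itself but only on $\dot{\wh\phi}$, every bound must ultimately be expressed in terms of $\|\dot{\wh\phi}\|_{L^2}$ with a constant independent of $\wh\phi\in S$. The crucial inputs are: (i) uniform pointwise control of $\wh\phi$, which holds because the finite-dimensional subspace $H^0(L)\subset\Gamma(L)$ has equivalent $L^2$ and $C^0$ norms (equivalently, the constant $\alpha$ in \eqref{alphadef} bounds $|\wh\phi|$ everywhere on $S$); and (ii) the uniform bound $\|u\|_{C^0}\le C\eps$ from Lemma \ref{lem2}, which in particular gives $\|e^u-1\|_{C^0}\le C\eps$ and $\|e^u\|_{C^0}\le 1+C\eps$.

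From (i) and (ii), I would first obtain $\|a\|_{L^2}\le C\eps$ and the lower bound $\int_\Sigma a=\eps\int_\Sigma|\wh\phi|^2 e^u\ge \eps(1-C\eps)$, using $\|\wh\phi\|_{L^2}^2=1$. These combine to give $\|a\|_{L^2}/\int_\Sigma a=O(1)$, so the prefactor $(1+\|a\|_{L^2}/\int_\Sigma a)$ in Lemma \ref{LMlem} is harmless. Next, by Cauchy-Schwarz,
\[
\|b\|_{L^2}\le \eps\|e^u\|_{C^0}\,\alpha\,\|\dot{\wh\phi}\|_{L^2}\le C\eps\|\dot{\wh\phi}\|_{L^2},
\]
in both cases. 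The trouble, as the text warns, is the residual term $|\int_\Sigma b|/\int_\Sigma a$: naively this is $O(1)\|\dot{\wh\phi}\|_{L^2}$, which would destroy the claim.

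The main obstacle, and the heart of the argument, is therefore to show that $|\int_\Sigma b|=O(\eps^2)\|\dot{\wh\phi}\|_{L^2}$, exploiting subtle cancellations. The key observation is that the Hopf normalizations \eqref{hopf1} and \eqref{hopf2} say exactly
\[
\int_\Sigma h(\wh\phi,\dot{\wh\phi})=0,\qquad \int_\Sigma h(i\wh\phi,\dot{\wh\phi})=0.
\]
So writing $e^u=1+(e^u-1)$, one gets for the $\dot\chi$ equation
\[
\int_\Sigma b=-\eps\int_\Sigma h(i\wh\phi,\dot{\wh\phi})-\eps\int_\Sigma (e^u-1)\,h(i\wh\phi,\dot{\wh\phi})=-\eps\int_\Sigma(e^u-1)\,h(i\wh\phi,\dot{\wh\phi}),
\]
and similarly for the $\dot u$ equation using \eqref{hopf1}. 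Bounding by Cauchy-Schwarz and invoking $\|e^u-1\|_{C^0}\le C\eps$ together with $\|\wh\phi\|_{L^2}=1$ yields
\[
\Bigl|\int_\Sigma b\Bigr|\le \eps\|e^u-1\|_{C^0}\|\wh\phi\|_{L^2}\|\dot{\wh\phi}\|_{L^2}\le C\eps^2\|\dot{\wh\phi}\|_{L^2},
\]
in both cases. Combined with $\int_\Sigma a\ge \eps/2$ (for $\eps$ small), this gives $|\int_\Sigma b|/\int_\Sigma a\le C\eps\|\dot{\wh\phi}\|_{L^2}$, and the inner term $\|b\|_{L^2}+(\|a\|_{L^2}/\int_\Sigma a)|\int_\Sigma b|$ is likewise $O(\eps)\|\dot{\wh\phi}\|_{L^2}$.

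Plugging these estimates into the conclusion of Lemma \ref{LMlem} then gives $\|\dot\chi\|_{H^1},\|\dot u\|_{H^1}\le C\eps\|\dot{\wh\phi}\|_{L^2}$ (absorbing the factor $1/2$ coming from the fact that it is $\dot u/2$, not $\dot u$, that solves the PDE with source $b$). The bookkeeping is straightforward; the real work was the cancellation in $\int_\Sigma b$, which is why the statement of Lemma \ref{LMlem} had to separate out the $|\int_M b|$ terms rather than use the cruder bound $\|b\|_{L^1}\le\sqrt{|M|}\|b\|_{L^2}$.
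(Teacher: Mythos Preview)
Your argument is correct and follows essentially the same route as the paper: you apply Lemma \ref{LMlem} with $a=\eps|\wh\phi|^2e^u$, use Lemma \ref{lem2} and the constant $\alpha$ to bound $\|a\|_{L^2}$, $\int_\Sigma a$ and $\|b\|_{L^2}$, and---crucially---exploit the Hopf conditions \eqref{hopf1}, \eqref{hopf2} together with $e^u=1+(e^u-1)$ to show $|\int_\Sigma b|\le C\eps^2\|\dot{\wh\phi}\|_{L^2}$. The paper's proof is identical up to notation (it writes $U:=e^u-1$ and invokes the Mean Value Theorem explicitly for the bound $\|U\|_{C^0}\le c\eps$).
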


\begin{proof} Throughout this proof, $c$ will denote a positive constant depending (at most) on $g_\Sigma$ and $(L,h)$. The value of $c$ may vary from line to line. Let $a=\eps|\wh\phi|^2e^u\geq 0$, as 
defined in \eqref{abdef}. Since $\wh\phi^{-1}(0)$ is finite, it is clear that $\int_\Sigma a>0$. By Lemmas \ref{lem1} and \ref{lem2} and the Mean Value Theorem,
\beq
e^u=1+U
\eeq
where $\|U\|_{C^0}\leq \|u\|_{C^0}e^{\|u\|_{C^0}}\leq c\eps$. Hence
\bea
\int_\Sigma a&=&\eps\int_\Sigma|\wh\phi|^2(1+U)\nonumber \\
&=&\eps+\eps\int_\Sigma|\wh\phi|^2U\nonumber \\
&\geq& \eps -\eps\|U\|_{C^0}\int_\Sigma|\wh\phi|^2\nonumber \\
&\geq& \eps-c\eps^2\geq c\eps.
 \eea 
Furthermore,
\bea
\|a\|_{L^2}^2&=&\eps^2\int_\Sigma |\wh\phi|^4e^{2u}\nonumber \\
&\leq& 2\eps^2\int_\Sigma |\wh\phi|^4(1+U^2)\nonumber \\
&\leq& 2\eps^2\alpha^2(1+\|U\|_{C^0}^2)\nonumber \\
&\leq& c\eps^2.
\eea

Consider now the function $b$ defined in \eqref{abdef}. Note that
\beq\label{fdef}
b=-\eps e^u k,\qquad k:=\left\{\begin{array}{cc}
h(i\wh\phi,\dot{\wh{\phi}}), & v=\dot\chi,\\
h(\wh\phi,\dot{\wh{\phi}}), & v=\dot u/2,\end{array}\right.
\eeq
and, crucially, in either case $\int_\Sigma k=0$ by
\eqref{hopf1} or \eqref{hopf2}.
Hence
\beq
\int_\Sigma b=\eps\int_\Sigma(1+U)k=\eps\int_\Sigma Uk,
\eeq
and so
\bea
\left|\int_\Sigma b\right|&\leq& \eps\|U\|_{C^0}\int_\Sigma |k|\nonumber \\
&\leq& \eps\|U\|_{C^0}\int_\Sigma |\wh\phi||\dot{\wh{\phi}}|\nonumber \\
&\leq& \eps\|U\|_{C^0}\|\wh\phi\|_{L^2}\|\dot{\wh{\phi}}\|_{L^2}\nonumber \\
&=&\eps\|U\|_{C^0}\|\dot{\wh{\phi}}\|_{L^2}\nonumber \\
&\leq& c\eps^2\|\dot{\wh{\phi}}\|_{L^2}.
\eea
Furthermore
\bea
\|b\|_{L^2}&=&\eps\|e^uf\|_{L^2}\nonumber \\
&\leq&\eps e^{\|u\|_{C^0}}\|\wh\phi\|_{C^0}\|\dot{\wh\phi}\|_{L^2}\nonumber \\
&\leq& c\eps\|\dot{\wh\phi}\|_{L^2}
\eea
by Lemma \ref{lem1}. Hence
\beq
\frac{\|a\|_{L^2}}{\int_\Sigma a}\leq c,\qquad\mbox{and}\qquad
\frac{|\int_\Sigma b|}{\int_\Sigma a}\leq c\eps\|\dot{\wh\phi}\|_{L^2}.
\eeq

Hence, by Lemma \ref{LMlem}, both $v=\dot\chi$ and $v=\dot u/2$ satisfy
\bea
\|v\|_{H^1}&\leq& c\left(\|b\|_{L^2}+|\int_\Sigma b|+\frac{|\int_\Sigma b|}{\int_\Sigma a}\right)\nonumber \\
&\leq& c\eps\|\dot{\wh\phi}\|_{L^2},
\eea
whence the claim follows.
\end{proof}

\begin{prop}\label{ps}
There exists a constant $C>0$, depending only on $(L,h)$ and $g_\Sigma$, such that
$$
\left| \|\dot\phi\|_{L^2}^2+\|\dot{A}\|_{L^2}^2-\eps\|\dot{\wh\phi}\|_{L^2}^2\right|\leq C\eps^2\|\dot{\wh\phi}\|_{L^2}^2.
$$
\end{prop}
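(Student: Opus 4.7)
The plan is to start from the explicit formula \eqref{y-cl} for $g(v,v) = \|\dot\phi\|_{L^2}^2 + \|\dot A\|_{L^2}^2$ already derived in the gauge-orthogonal parametrization, and to show that all terms other than $\eps\|\dot{\wh\phi}\|_{L^2}^2$ are bounded by $C\eps^2\|\dot{\wh\phi}\|_{L^2}^2$. The key inputs are: (i) the pointwise bound $\|e^u - 1\|_{C^0} \leq c\eps$ obtained from Lemma \ref{lem2} and the Mean Value Theorem, exactly as in the proof of Proposition \ref{jodgas}; (ii) the $H^1$-bounds on $\dot u$ and $\dot\chi$ from Proposition \ref{jodgas}; and (iii) the pointwise bound $|\wh\phi| \leq \alpha$ together with $\|\wh\phi\|_{L^2} = 1$.

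Write $e^u = 1 + U$ with $\|U\|_{C^0} \leq c\eps$. The leading term in \eqref{y-cl} is
\[
\eps\int_\Sigma e^u|\dot{\wh\phi}|^2 = \eps\|\dot{\wh\phi}\|_{L^2}^2 + \eps\int_\Sigma U|\dot{\wh\phi}|^2,
\]
where the remainder is bounded by $\eps\|U\|_{C^0}\|\dot{\wh\phi}\|_{L^2}^2 \leq c\eps^2\|\dot{\wh\phi}\|_{L^2}^2$. The cross terms
\[
\eps\int_\Sigma e^u h(\dot{\wh\phi},\wh\phi)\dot u, \qquad 2\eps\int_\Sigma e^u h(\dot{\wh\phi},i\wh\phi)\dot\chi
\]
are controlled by Cauchy–Schwarz: each is bounded by $c\eps\|\dot{\wh\phi}\|_{L^2}\|\dot u\|_{L^2}$ or $c\eps\|\dot{\wh\phi}\|_{L^2}\|\dot\chi\|_{L^2}$, and Proposition \ref{jodgas} upgrades these to $O(\eps^2\|\dot{\wh\phi}\|_{L^2}^2)$. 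The quadratic bulk terms $\eps\int_\Sigma e^u|\wh\phi|^2(\tfrac14\dot u^2 + \dot\chi^2)$ are even smaller, being bounded by $c\eps(\|\dot u\|_{L^2}^2 + \|\dot\chi\|_{L^2}^2) \leq c\eps^3\|\dot{\wh\phi}\|_{L^2}^2$. Finally, the kinetic terms $\tfrac14\|\d\dot u\|_{L^2}^2$ and $\|\d\dot\chi\|_{L^2}^2$ are bounded above by $\tfrac14\|\dot u\|_{H^1}^2$ and $\|\dot\chi\|_{H^1}^2$, which again give $O(\eps^2\|\dot{\wh\phi}\|_{L^2}^2)$ via Proposition \ref{jodgas}.

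Summing, every deviation from $\eps\|\dot{\wh\phi}\|_{L^2}^2$ is $O(\eps^2\|\dot{\wh\phi}\|_{L^2}^2)$, which yields the claimed bound with a constant $C$ depending only on $(L,h)$ and $g_\Sigma$. I do not anticipate a genuine obstacle here, since Proposition \ref{jodgas} has already done the delicate work: the only subtlety is to notice that the cross terms are order $\eps\cdot\eps = \eps^2$ (one factor of $\eps$ from the prefactor, one from $\|\dot u\|_{H^1}$ or $\|\dot\chi\|_{H^1}$) rather than merely order $\eps$, which is what prevents these terms from contaminating the leading-order Fubini–Study contribution.
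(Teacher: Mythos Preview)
Your proposal is correct and follows essentially the same approach as the paper: both arguments start from the expression \eqref{y-cl} (the paper re-derives it by differentiating \eqref{jg}, \eqref{joga}), split off the leading term $\eps\|\dot{\wh\phi}\|_{L^2}^2$ via $e^u=1+U$ with $\|U\|_{C^0}\leq c\eps$, and bound the remaining cross, bulk, and kinetic terms using $\|\wh\phi\|_{C^0}\leq\alpha$, Lemma~\ref{lem1}, and the $H^1$-bounds of Proposition~\ref{jodgas}. The term-by-term estimates you give match the paper's line by line.
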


\begin{proof}
Again, $c$ will denote a positive constant depending (at most) on $g_\Sigma$ and $(L,h)$, whose value may change from line to line. 
Differentiating \eqref{jg} and \eqref{joga} with respect to $t$, we see that
\bea
\|\dot\phi\|_{L^2}^2&=&
\eps\|\dot{\wh\phi}\|_{L^2}^2+\eps\int_\Sigma U|\dot{\wh\phi}|^2
+\eps\int_\Sigma\left\{h(\dot{\wh\phi},(\dot{u}+2i\dot\chi)\wh\phi) 
+\left(\frac14\dot u^2+\dot\chi^2\right)|\wh\phi|^2\right\}e^u\nonumber\\
\|\dot{A}\|_{L^2}^2&=&\frac14\|\d\dot{u}\|_{L^2}^2+\|\d\dot\chi\|_{L^2}^2,
\eea
where, as before, $e^u=:1+U$ with $\|U\|_{C^0}\leq c\eps$. Hence
\bea
\left| \|\dot\phi\|_{L^2}^2+\|\dot{A}\|_{L^2}^2-\eps\|\dot{\wh\phi}\|_{L^2}^2\right|
&\leq& \eps\|U\|_{C^0}\|\dot{\wh\phi}\|_{L^2}^2
+\eps\|\wh\phi\|_{C^0}e^{\|u\|_{C^0}}\|\dot{\wh\phi}\|_{L^2}(\|\dot{u}\|_{L^2}+2\|\dot\chi\|_{L^2})\nonumber \\
& &+\eps e^{\|u\|_{C^0}}\|\wh\phi\|_{C^0}^2\left(\frac14\|\dot u\|_{L^2}^2+\|\dot\chi\|_{L^2}^2\right)+\frac14\|\dot u\|_{H^1}^2
+\|\dot\chi\|_{H^1}^2\nonumber \\
&\leq& c\eps^2+\eps\alpha c\|\dot{\wh\phi}\|_{L^2}
3c\eps\|\dot{\wh\phi}\|_{L^2}+\eps c\alpha^23C^2\eps^2
\|\dot{\wh\phi}\|_{L^2}^2+\frac34c^2\eps^2
\|\dot{\wh\phi}\|_{L^2}^2\nonumber \\
&=&c\eps^2\|\dot{\wh\phi}\|_{L^2}^2,
\eea
where we have used Lemma \ref{lem1} to bound $e^{\|u\|_{C^0}}$ and Proposition \ref{jodgas}.

\end{proof}
 
Our main theorem immediately follows.

\begin{lproof}{of Theorem \ref{main}}
We must show that the symmetric bilinear form $\sigma_\eps:= g_\eps-g_0$ converges uniformly to zero, as $\eps\searrow 0$, on the unit tangent bundle of any fixed reference metric for $M_n$. 

Given any tangent vector $X\in T_{[(\phi,A)]}M_n$ we may choose a representative curve $(\phi(t),A(t))$ of the form \eqref{jg},
\eqref{joga} to compute
\beq
g_\eps(X,X)=\frac1\eps(\|\dot\phi\|_{L^2}^2+\|\dot{A}\|_{L^2}^2),
\eeq
and
\beq
g_0(X,X)=\|\dot{\wh\phi}\|_{L^2}^2.
\eeq
Proposition \ref{ps} implies that
\beq\label{ad}
|g_\eps(X,X)-g_0(X,X)|\leq C\eps g_0(X,X)
\eeq
where the constant $C$ is independent of $X$ and $\eps$. Hence,
for all unit length vectors ${X},{Y}$ tangent to $(M_n,  g_0)$,
\bea
|\sigma_\eps({X},{Y})|&=& 
\frac14|\sigma_\eps({X}+{Y},{X}+{Y})-
\sigma_\eps({X}-{Y},{X}-{Y})| \nonumber \\
&\leq&\frac14C\eps( g_0({X}+{Y},{X}+{Y})+
g_0({X}-{Y},{X}-{Y})) \nonumber \\
&=& C\eps.
\eea
 This establishes uniform convergence with respect to the reference metric $ g_0$.

\end{lproof}

\section{Convergence of the spectrum}\label{sec5}\news

Recall that the {\em spectrum} of a closed Riemannian manifold $(M,g)$ is the spectrum of its Laplace-Beltrami operator $\Delta_g=\delta\d$, arranged in non-decreasing order,
\beq
0=\lambda_0(g)<\lambda_1(g)\leq \lambda_2(g)\leq \lambda_3(g)\leq\cdots,
\eeq
a sequence of numbers that diverges to infinity. In the case of
$(M_n,g_\eps)$, this spectrum has a direct physical interpretation in terms of the quantum dynamics of $n$-vortices. The Hamiltonian operator governing this dynamics is (conjecturally) $H=\frac12\Delta_g$, where $g=\eps g_\eps$ is the (non-normalized) $L^2$ metric. Hence, the energy spectrum of $n$ quantum vortices moving on $\Sigma$ is
\beq
E_k(\eps)=\frac{\lambda_k(g_\eps)}{2\eps},\qquad k=0,1,2,\ldots.
\eeq
Note that, in local coordinates,
\beq
\Delta_{g_\eps}=-g^{ij}_\eps\left(\frac{\cd\:}{\cd x_i \cd x_j}
-\Gamma^{\eps,k}_{ij}\frac{\cd\: }{\cd x_k}\right),
\eeq
where $\Gamma^\eps$ denotes the Christoffel symbols of the metric $g_\eps$.
Since $\Gamma^\eps$ involves derivatives of $g_\eps$, the $C^0$ convergence proved in Theorem \ref{main} does {\em not} imply that
$\Delta_{g_\eps}$ converges to $\Delta_{g_0}$. Rather remarkably, we can still conclude, by work of Bando and Urakawa \cite{banura}, that the
{\em spectrum} of $\Delta_{g_\eps}$ converges uniformly to the
{\em spectrum} of $\Delta_{g_0}$, in the sense that
\beq
\frac{\lambda_k(g_\eps)}{\lambda_k(g_0)}\stackrel{\eps\ra 0}{\longrightarrow} 1
\eeq
uniformly in $k$. More precisely:

\begin{cor}\label{speccy}
Let $C>0$ be the constant, depending only on $(L,h)$ and $g_\Sigma$, whose existence is established in Proposition \ref{ps}. Then
 for all $\eps\in[0,1/C)$ and all
$k\geq 1$,
$$
\frac{(1-C\eps)^n}{(1+C\eps)^{n+1}}\leq\frac{\lambda_k(g_\eps)}{\lambda_k(g_0)}\leq\frac{(1+C\eps)^n}{(1-C\eps)^{n+1}}.
$$
\end{cor}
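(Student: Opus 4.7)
The plan is to avoid appealing to the Bando--Urakawa machinery and instead use the pointwise metric comparison already established in the proof of Theorem \ref{main} together with the classical Courant--Fischer min--max principle. The starting point is inequality \eqref{ad}: after dividing by $g_0(X,X)$, it reads, for every nonzero tangent vector $X$ to $M_n$ and every $\eps\in[0,1/C)$,
$$
(1-C\eps)\,g_0(X,X)\leq g_\eps(X,X)\leq (1+C\eps)\,g_0(X,X).
$$

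Next I would convert this quadratic-form sandwich into the ingredients needed for Rayleigh quotients. The induced bound on the dual (inverse) metric, acting on one-forms, reverses the inequalities: for any $f\in C^\infty(M_n)$,
$$
(1+C\eps)^{-1}|\d f|^2_{g_0}\leq |\d f|^2_{g_\eps}\leq (1-C\eps)^{-1}|\d f|^2_{g_0}.
$$
Since $M_n\equiv\CP^n$ has real dimension $2n$, comparing determinants (take $g_0$-orthonormal frames) yields
$$
(1-C\eps)^{n}\,dV_{g_0}\leq dV_{g_\eps}\leq (1+C\eps)^{n}\,dV_{g_0}.
$$

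Finally, I would apply the min--max characterization
$$
\lambda_k(g)=\min_{\substack{V\subset H^1(M_n)\\ \dim V=k+1}}\;\max_{f\in V\setminus\{0\}}\frac{\int_{M_n}|\d f|^2_{g}\,dV_{g}}{\int_{M_n}f^2\,dV_{g}}.
$$
For every nonzero $f$, combining the bounds above gives
$$
\frac{(1-C\eps)^n}{(1+C\eps)^{n+1}}\,\frac{\int_{M_n}|\d f|^2_{g_0}\,dV_{g_0}}{\int_{M_n}f^2\,dV_{g_0}}\leq \frac{\int_{M_n}|\d f|^2_{g_\eps}\,dV_{g_\eps}}{\int_{M_n}f^2\,dV_{g_\eps}}\leq \frac{(1+C\eps)^n}{(1-C\eps)^{n+1}}\,\frac{\int_{M_n}|\d f|^2_{g_0}\,dV_{g_0}}{\int_{M_n}f^2\,dV_{g_0}},
$$
the asymmetric exponents arising from one factor of $(1\pm C\eps)^{\pm 1}$ contributed by the inverse metric in the numerator together with the two factors of $(1\pm C\eps)^n$ contributed by the volume forms in the numerator and denominator that only partially cancel. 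Taking min--max of the sandwiched Rayleigh quotient over the same family of $(k+1)$-dimensional subspaces $V$ immediately produces the claimed two-sided bound on $\lambda_k(g_\eps)/\lambda_k(g_0)$. No step is genuinely delicate; the only thing to watch is the bookkeeping of exponents and the use of real (not complex) dimension $2n$ for the volume-form comparison.
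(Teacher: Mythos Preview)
Your proposal is correct and essentially identical to the paper's own proof: the only ``Bando--Urakawa machinery'' the paper invokes is their Proposition~2.1, which is precisely the Courant--Fischer min--max characterization you use, and from there the paper derives the same pointwise bounds on the inverse metric and the volume form (via the eigenvalues of $G_\eps=\I_{2n}+S_\eps$ in a $g_0$-orthonormal frame) and plugs them into the Rayleigh quotient exactly as you do. The bookkeeping of exponents matches line for line.
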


\begin{proof} Given a Riemannian metric $g$ on $M_n$, and a finite dimensional subspace $V$ of $C^\infty(M_n)$, let
\beq
\Lambda_g(V):=\sup\{\|\d f\|_{L^2,g}^2/\|f\|_{L^2,g}^2\: :\:
f\in V\less\{0\}\}.
\eeq
Then, by Proposition 2.1 of \cite{banura},
\beq
\lambda_k(g)=\inf\{\Lambda_g(V)\: :\: V< C^\infty(M_n),\, 
\dim V=k+1\},
\eeq
the infimum of $\Lambda_g$ over all $(k+1)$-dimensional subspaces of $C^\infty(M_n)$.

At a given point $p\in M_n$, let $E_1,\ldots, E_{2n}$ be an orthonormal frame for $(T_pM_n,g_0)$, $G_\eps$ be the matrix with
entries $g_\eps(E_i,E_j)$, $S_\eps$ the matrix with entries 
$\sigma_\eps(E_i,E_j)$ (where, as before, $\sigma_\eps=g_\eps-g_0$) and
$H_\eps$ be the inverse of $G_\eps$. Then the volume form defined by
$g_\eps$ is 
\beq
\vol_{g_\eps}=v_\eps \vol_{g_0},
\eeq
 where $v_\eps(p)=
(\det G_\eps)^{1/2}$. Now, by \eqref{ad}, every eigenvalue $\mu$ of $S_\eps$ satisfies $|\mu|\leq C\eps$, so the eigenvalues $1+\mu$ of
$G_\eps=\I_{2n}+S_\eps$ lie in $[1-C\eps, 1+C\eps]$. Hence, for all $\eps<1/C$,
\beq\label{abda}
(1-C\eps)^n\leq v_\eps(p)\leq (1+C\eps)^n.
\eeq
Furthermore, the eigenvalues $\nu=(1+\mu)^{-1}$ of $H_\eps$ lie in
$[(1+C\eps)^{-1},(1-C\eps)^{-1}]$. Now defining, for any given smooth function $f$, 
$v$ to be the column matrix with entries $\d f_p(E_i)$,
\beq
|\d f_p|_{g_\eps}^2=v^TH_\eps v,\qquad
|\d f_p|_{g_0}^2=v^Tv.
\eeq
Hence
\beq\label{abida}
\frac{|\d f_p|_{g_0}^2}{(1+C\eps)}\leq
\nu_{min} |\d f_p|_{g_0}^2\leq
|\d f_p|_{g_\eps}^2\leq \nu_{max} |\d f_p|_{g_0}^2
\leq \frac{|\d f_p|_{g_0}^2}{(1-C\eps)}.
\eeq

The bounds \eqref{abda} and \eqref{abida} are independent of 
$p\in M_n$, so hold globally. Hence, for all $f\in C^\infty(M_n)$,
\bea
(1-C\eps)^n\|f\|_{L^2, g_0}^2\leq &\|f\|_{L^2,g_\eps}^2& \leq
(1+C\eps)^n\|f\|_{L^2, g_0}^2, \nonumber \\
\frac{(1-C\eps)^n}{1+C\eps}\|\d f\|_{L^2, g_0}^2\leq &\|\d f\|_{L^2,g_\eps}^2& \leq
\frac{(1+C\eps)^n}{1-C\eps}\|\d f\|_{L^2, g_0}^2,
\eea
and so, for all $f\neq 0$,
\bea
\frac{\|\d f\|_{L^2,g_\eps}^2}{\|f\|_{L^2,g_\eps}^2}
&\leq&\frac{(1+C\eps)^n}{(1-C\eps)^{n+1}}\frac{\|\d f\|_{L^2,g_0}^2}{\|f\|_{L^2,g_0}^2}\nonumber \\
\frac{\|\d f\|_{L^2,g_0}^2}{\|f\|_{L^2,g_0}^2}
&\leq&\frac{(1+C\eps)^{n+1}}{(1-C\eps)^{n}}\frac{\|\d f\|_{L^2,g_\eps}^2}{\|f\|_{L^2,g_\eps}^2}.
\eea
Hence, for any $(k+1)$ dimensional subspace $V$ of $C^\infty(M_n)$,
\bea
\Lambda_{g_\eps}(V)&\leq& \frac{(1+C\eps)^n}{(1-C\eps)^{n+1}}\Lambda_{g_0}(V), \nonumber \\
\Lambda_{g_0}(V)&\leq& \frac{(1+C\eps)^{n+1}}{(1-C\eps)^{n}}\Lambda_{g_\eps}(V),
\eea
and so
\bea
\lambda_k(g_0)&\geq&\frac{(1-C\eps)^{n+1}}{(1+C\eps)^n}\lambda_k(g_\eps),
\nonumber \\
\lambda_k(g_\eps)&\geq&\frac{(1-C\eps)^n}{(1+C\eps)^{n+1}}\lambda_k(g_0),
\eea
which completes the proof.
\end{proof}

The spectrum of $g_0$ is known explicitly \cite{bergaumaz},
\beq
\lambda_k(g_0)=4k(n+k),\qquad d_k(g_0)=n(n+2k)\left(\begin{array}{c}n-1+k\\ k
\end{array}\right)^2,
\eeq
where we have changed our labelling convention so that $0=\lambda_0<\lambda_1<\lambda_2<\cdots$ and $d_k$ is the degeneracy of $\lambda_k$. In order to extract explicit bounds on $\lambda_k(g_\eps)$ from Corollary \ref{speccy}, we need an explicit upper bound on the constant $C$. Such a bound should be obtainable when $\Sigma$ is given the round metric, since explicit formulae for $\wh\phi$ are available in this case \cite{bapman}. A particularly important issue, if one is to apply Corollary \ref{speccy} to the study of the quantum statistical mechanics of vortices
\cite{man-qsm}, is the $n$ dependence of $C$.

\section*{Acknowledgements}
JMS wishes to thank Gerasim Kokarev for bringing reference \cite{banura} to his attention.

\bibliographystyle{../BIBLIOGRAPHY/stylefile.bst}
\bibliography{../BIBLIOGRAPHY/bibliography.bib}

\end{document}